\documentclass[leqno]{amsart}
\usepackage{amsmath,amsfonts,amsthm,amssymb,indentfirst,epic,url}

\setlength{\textwidth}{28pc}
\setlength{\textheight}{43pc}
\sloppy

\setlength{\mathsurround}{1.67pt}

\newtheorem{theorem}{Theorem}
\newtheorem{lemma}[theorem]{Lemma}
\newtheorem{corollary}[theorem]{Corollary}
\newtheorem{proposition}[theorem]{Proposition}
\newtheorem{definition}[theorem]{Definition}
\newtheorem{question}[theorem]{Question}
\newtheorem{example}[theorem]{Example}

\newcommand{\lm}{\varprojlim}
\newcommand{\length}{\mathrm{length}}
\newcommand{\V}{\mathbf{V}}

\raggedbottom

\begin{document}

\title%
{Continuity of homomorphisms on pro-nilpotent algebras}
\thanks{Any updates, errata, related references, etc., learned of
after publication of this note will be recorded at
\url{http://math.berkeley.edu/~gbergman/papers/}\,.
}

\subjclass[2010]{Primary: 17A01, 18A30, 49S10.
Secondary: 16W80, 17B99, 17C99.}
\keywords{topologically finitely generated pro-nilpotent algebra;
continuity in the pro-finite-dimensional (linearly compact) topology;
associative, Lie, and Jordan algebras.
}

\author{George M. Bergman}
\address{University of California\\
Berkeley, CA 94720-3840, USA}
\email{gbergman@math.berkeley.edu}

\begin{abstract}
Let $\V$ be a variety of not necessarily associative algebras,
and $A$ an inverse limit of nilpotent algebras $A_i\in\V,$ such
that some finitely generated subalgebra $S\subseteq A$ is dense in $A$
under the inverse limit of the discrete topologies on the $A_i.$

A sufficient condition on $\V$ is obtained for all
algebra homomorphisms from $A$ to finite-dimensional algebras $B$ to be
continuous; in other words, for the kernels of all such
homomorphisms to be open ideals.
This condition is satisfied, in particular, if $\V$ is the variety
of associative, Lie, or Jordan algebras.

Examples are given showing the need for our hypotheses, and some open
questions are noted.
\end{abstract}
\maketitle

\section{Background: From pro-$\!p\!$ groups to pro-nilpotent algebras}\label{S.intro}

A result of Serre's on topological groups says that if $G$ is a
pro-$\!p\!$ group (an inverse limit of finite $\!p$-groups)
which is topologically finitely generated
(i.e., has a finitely generated subgroup which is dense in
$G$ under the inverse limit topology), then any homomorphism from $G$
to a finite group $H$ is continuous (\cite[Theorem~1.17]{pro-p}
\cite[\S I.4.2, Exercises~5-6, p.32]{Serre}).
Two key steps in the proof are that~ (i)~every finite homomorphic
image of a pro-$\!p\!$ group $G$ is
a $\!p$-group, and (ii)~for $G$ a topologically finitely
generated pro-$\!p\!$ group, its subgroup $G^p\,[G,G]$ is closed.

In \cite{pro-np} we obtained a result similar to (i),
namely, that if $A$ is a pro-nilpotent (not necessarily associative)
algebra over a field $k,$ then every finite-dimensional
homomorphic image of $A$ is nilpotent.
The analog of (ii) would say that when such an $A$ is
topologically finitely generated, the ideal $A^2$ is closed.
(To see the analogy, note that $G/(G^p\,[G,G])$ is the universal
homomorphic image of $G$ which is a
$\!\mathbb{Z}/p\mathbb{Z}$-vector space, and
$A/A^2$ the universal homomorphic image of $A$ which is a
$\!k\!$-vector space with zero multiplication.)

Is this analog of (ii) true?

The proof of the group-theoretic statement (ii) is based on
first showing that if a finite $\!p$-group $F$
is generated by $g_1,\dots,g_r,$ then
\begin{equation}\begin{minipage}[c]{23pc}\label{d.[]=}
$[F,F]\ =\ [F,g_1]\dots [F,g_r],$
\end{minipage}\end{equation}
i.e., every member of $[F,F]$ is
a product of exactly $r$ commutators of the indicated sorts.
From this one deduces that if a pro-$\!p\!$ group $G$ is
generated topologically by $g_1,\dots,g_r,$ then likewise
$G^p\,[G,G]=G^p\,[G,g_1]\dots [G,g_r].$
The latter set is compact since $G$ is, hence it must be closed in $G,$
as claimed.

If an {\em associative} algebra $S$ is generated by elements
$g_1,\dots,g_r,$ it is clear that, similarly,
\begin{equation}\begin{minipage}[c]{23pc}\label{d.S^2=}
$S^2\ =\ S\,g_1+\dots+S\,g_r\,.$
\end{minipage}\end{equation}
We shall see below that whether a formula
like~(\ref{d.S^2=}) holds for all finitely generated
$S$ in a variety $\V$ of not necessarily associative
algebras depends on $\V.$
In particular, we shall find that~(\ref{d.S^2=}) itself also holds
for Lie algebras, while a more complicated relation which
we can use in the same way holds for Jordan algebras.
For varieties in which such identities hold, we obtain an analog
of Serre's result, Theorem~\ref{T.main} below.
Examples in \S\ref{S.cegs} show the need for some such condition on
$\V,$ and for the topological finite generation of $A.$

I am indebted to
Yiftach Barnea
for inspiring this note
by pointing to the analogy between the main result of \cite{pro-np},
and step~(i) in the proof of Serre's result on pro-$\!p\!$ groups.
I am also grateful to J.-P.\,Serre and
E.\,Zelmanov for helpful answers to questions I sent them, and
to the referee for some useful suggestions.

\section{Review of linearly compact vector spaces}\label{S.lin_cp}
The analog of the compact topology on a profinite group is the
{\em linearly compact} topology on (the underlying
vector-space of) a pro-finite-dimensional algebra.
Most of the basic arguments regarding linear compactness of
topological vector spaces work
for left modules over a general associative ring, and
we shall sketch the material here in that context.
However we shall only call on the vector space case, so the reader who
so prefers may read this section with only that case in mind.

\begin{definition}\label{D.lin_cp}
Let $M$ be a left module over an associative unital ring $R.$

A {\em linear topology} on $M$ means a topology under which
the module operations are continuous, and which has
a neighborhood basis of $0$ consisting of submodules.
\textup{(}A basis of open sets is then given by the cosets
of these open submodules.\textup{)}

Under such a topology, $M$ is said to be {\em linearly compact} if
it is Hausdorff, and if every family of cosets of closed submodules of
$M$ that has the finite intersection property has nonempty intersection.
\end{definition}

(We assume no topology given on $R.$
Indeed, even if $R$ is the real or complex field, its standard topology
is unrelated to the linear topologies on $\!R\!$-vector-spaces.)

The closed submodules, used in the above definition of
linear compactness, are characterized in

\begin{lemma}\label{L.closed=cap_open}
Under any linear topology on a module $M,$ the closed submodules
are the \textup{(}not necessarily finite\textup{)}
intersections of open submodules.
\end{lemma}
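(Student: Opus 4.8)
The plan is to prove the two asserted inclusions separately, and the only tool needed is that a linear topology makes each translation $x\mapsto x+a$ a homeomorphism (it is continuous as a restriction of the module addition, with continuous inverse $x\mapsto x-a$) and has a neighborhood basis of $0$ consisting of open submodules. First I would check that every \emph{open} submodule $N\subseteq M$ is in fact closed: its complement is the union of the cosets $m+N$ with $m\notin N$, and each such coset is open, being a translate of the open set $N$; hence $M\setminus N$ is open. Consequently an arbitrary intersection $\bigcap_\alpha N_\alpha$ of open submodules is at once a submodule and an intersection of closed sets, so it is a closed submodule. This establishes the ``$\supseteq$'' half of the description.

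For the reverse inclusion I would start with an arbitrary closed submodule $N$ and show that it equals the intersection of all the open submodules that contain it. The containment of $N$ in that intersection is immediate, so the real content is to produce, for each $m\in M\setminus N$, an open submodule $U$ with $N\subseteq U$ but $m\notin U$. Since $N$ is closed, $M\setminus N$ is open; since the topology has a basis of open submodules at $0$ and translations are homeomorphisms, there is an open submodule $V$ with $m+V\subseteq M\setminus N$, that is, $(m+V)\cap N=\emptyset$. I would then take $U=N+V$. This $U$ is a submodule; it contains $N$ because $0\in V$; and it is open, being the union $\bigcup_{n\in N}(n+V)$ of translates of the open submodule $V$. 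Finally $m\notin U$: if $m=n+v$ with $n\in N$ and $v\in V$, then $n=m-v\in(m+V)\cap N$, contradicting the choice of $V$. Hence the intersection of all open submodules containing $N$ is exactly $N$.

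There is no serious obstacle here; the single point needing a moment's care is the verification just sketched that $N+V$ is open and omits $m$, and the rest is formal manipulation with translations and the submodule neighborhood basis at $0$. I would present the argument essentially in the two blocks above, perhaps opening with the remark that open submodules are closed so that the ``intersection of open submodules implies closed submodule'' direction is disposed of in one line.
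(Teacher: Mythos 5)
Your argument is correct and coincides with the paper's own proof: the same observation that open submodules are closed (complement a union of open cosets) handles one inclusion, and for the other the paper likewise picks an open submodule $M'$ with $x+M'$ disjoint from a closed $N$ and uses that $N+M'$ is open (a union of cosets of $M'$) and omits $x$. Your $V$ and $U=N+V$ are exactly the paper's $M'$ and $N+M'$, with the disjointness/exclusion equivalence spelled out in slightly more detail.
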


\begin{proof}
Every open submodule $N$ is closed, since its complement is
the union of its nonidentity cosets, which are open.
Hence intersections of open submodules are also closed.

Conversely, if $N$ is a closed submodule and $x$ a point not in $N,$
then for some open submodule $M',$ the coset $x+M'$ is disjoint
from $N;$ equivalently, $x\notin N+M'.$
Since the submodule $N+M'$ is a union of cosets of $M',$
it is open, so $N$ is
the intersection of the open submodules containing it.
\end{proof}

We shall see below that the closed submodules of a linearly
topologized module do not in general
determine the open submodules, and hence the topology; but
that they do when $R$ is a field and $M$ is linearly compact.

Here are some tools for proving a linear topology linearly compact.

\begin{lemma}[cf.\ {\cite[II\,(27.2-4),\,(27.6)]{Lefschetz}}]\label{L.lcp<=}
Let $R$ be an associative unital ring.\vspace{.2em}

\textup{(i)}~ If an $\!R\!$-module $M$ with a Hausdorff
linear topology has
descending chain condition on closed submodules, it is linearly compact.
In particular, if an $\!R\!$-module $M$ is artinian, it is
linearly compact under the discrete topology.\vspace{.2em}

\textup{(ii)}~ A closed submodule of a linearly compact
$\!R\!$-module is linearly compact under the induced
topology.\vspace{.2em}

\textup{(iii)}~ The image of a linearly compact module under
a continuous homomorphism into a Hausdorff linearly topologized module
is again linearly compact.
In particular, any Hausdorff linear topology on a module weaker
than a linearly compact topology is linearly compact.\vspace{.2em}

\textup{(iv)}~ The limit \textup{(}in the
category-theoretic sense, which includes inverse limits,
direct products, fixed-point modules of group actions,
etc.\textup{)}\ of any small system of
linearly compact $\!R\!$-modules and continuous maps among
them is again linearly compact.
\textup{(}``Small'' means indexed by a set rather than a proper
class.\textup{)}
\end{lemma}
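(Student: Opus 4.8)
The plan is to obtain the four assertions essentially from the definitions, with the only real work in part~(iv), which reduces to a linear analogue of Tychonoff's theorem.

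For~(i), let $\{x_\alpha+N_\alpha\}_\alpha$ be a family of cosets of closed submodules of $M$ with the finite intersection property. The closed submodules arising as finite intersections $N_{\alpha_1}\cap\dots\cap N_{\alpha_n}$ form a nonempty collection to which the hypothesised descending chain condition applies, so it has a minimal member $N_0$. For each $\alpha$, $N_0\cap N_\alpha$ is again such a finite intersection and lies in $N_0$, so minimality gives $N_0\subseteq N_\alpha$. By the finite intersection property, the set $C_0$ obtained by intersecting the finitely many cosets whose submodules intersect to $N_0$ is a nonempty coset of $N_0$; since $C_0$ meets $x_\alpha+N_\alpha$ and $N_0\subseteq N_\alpha$, we get $C_0\subseteq x_\alpha+N_\alpha$. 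Any point of $C_0$ then lies in $\bigcap_\alpha(x_\alpha+N_\alpha)$. The ``in particular'' clause is immediate, since an artinian module is Hausdorff in the discrete topology and has every submodule closed, hence satisfies the descending chain condition on closed submodules.

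Parts~(ii) and~(iii) I expect to be routine. For~(ii): a submodule closed in a closed submodule $N$ of $M$ is closed in $M$, and a coset contained in $N$ is the same set inside $N$ as inside $M$, so a finite-intersection-property family of cosets of closed submodules of $N$ is one in $M$, and the point it produces in $M$ lies in $N$. For~(iii): given continuous $f\colon M\to M'$ with $M'$ Hausdorff and linearly topologized, and a finite-intersection-property family $\{y_\alpha+P_\alpha\}$ of cosets of closed submodules of $f(M)$, pull back to the cosets $x_\alpha+f^{-1}(P_\alpha)$ with $x_\alpha\in f^{-1}(y_\alpha)$; preimages of closed submodules are closed submodules, the finite intersection property is preserved because $f^{-1}$ respects intersections and preimages of nonempty subsets of $f(M)$ are nonempty, and any $x\in\bigcap_\alpha(x_\alpha+f^{-1}(P_\alpha))$ has $f(x)\in\bigcap_\alpha(y_\alpha+P_\alpha)$. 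Applying this to the identity map of $M$ from a linearly compact to a weaker Hausdorff linear topology gives the last assertion of~(iii).

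The substance is~(iv). The category-theoretic limit of a small diagram of linearly topologized modules has underlying module the usual limit of modules, i.e.\ the submodule of the product $\prod_i M_i$ cut out by the equalizer conditions associated to the arrows of the diagram, and it carries the subspace topology from the product. Each such condition is the equalizer of two continuous homomorphisms into some $M_j$; since the $M_j$, being linearly compact, are Hausdorff, each such equalizer is a closed submodule of $\prod_i M_i$, so the limit is a closed submodule of $\prod_i M_i$. By part~(ii) it therefore suffices to show that an arbitrary product of linearly compact modules is linearly compact (that this product is Hausdorff and has a neighbourhood basis of $0$ consisting of submodules is clear). This is the linear analogue of Tychonoff's theorem and is, I expect, the one genuinely hard point. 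I would prove it in Tychonoff's style: first note that a finite product of linearly compact modules is linearly compact (by induction, the two-factor case following from the fact that projection along a linearly compact direct factor is a closed map, itself a short consequence of linear compactness); then, given a finite-intersection-property family of cosets of closed submodules of $\prod_i M_i$, enlarge it by Zorn's lemma to a maximal such family $\mathcal{G}$. Maximality makes $\mathcal{G}$ closed under finite intersection and under enlarging any member to a larger coset of a closed submodule --- in particular under adding to a member its sum with a basic open submodule of the product. For each finite set $F$ of indices, the closures in $\prod_{i\in F}M_i$ of the projections of the members of $\mathcal{G}$ form a finite-intersection-property family of closed cosets in that (linearly compact) finite subproduct, and their intersections, as $F$ grows, form an inverse system whose bonding maps are closed; the main task --- and the step I expect to be most delicate --- is to extract from this a coherent family of coordinate choices giving a point of $\prod_i M_i$, and then to verify, using the stability of $\mathcal{G}$ under adding sums with basic open submodules, that this point lies in every member of $\mathcal{G}$, the membership check for a member of the form ``preimage of a coset in a finite subproduct'' being settled by the finite-product case. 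With the product case established, part~(iv) follows as above.
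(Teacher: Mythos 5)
Your proposal is correct and follows essentially the same route as the paper: parts (i)--(iii) are treated as routine verifications (which you carry out correctly), and (iv) is reduced to closed submodules of products via the products-and-equalizers construction of limits, with the product case being the linear analogue of Tychonoff's theorem. For that last point the paper simply cites Lefschetz and says the Tychonoff proof adapts; your maximal-family sketch is on the right track, though it can be streamlined --- one needs only that, for $x_i$ chosen in $\bigcap_{G\in\mathcal G}\overline{\pi_i(G)}$ by linear compactness of $M_i$, maximality forces each $\pi_i^{-1}(x_i+U_i)$ into $\mathcal G$, whence every basic neighbourhood $x+U$ meets every $G$ and $x\in G=\overline G$; no inverse system over finite subproducts is needed.
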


\begin{proof}[Sketch of proof]
The verifications of (i)-(iii) are routine.
(The Hausdorffness condition in (iii) is needed only because
``Hausdorff'' is part of the definition of linearly compact.)

To get (iv), recall \cite[proof of Theorem~V.2.1, p.109]{CW}
\cite[proof of Proposition~7.6.6]{245} that if a category
has products and equalizers, then it has small limits, and
the limit of a small system of objects $A_i$ and morphisms among
them can be constructed as the equalizer of a pair of maps
between products of copies of the $A_i.$
Now the product topology on a direct product of linearly
topologized modules is again linear,
and the proof of Tychonoff's theorem adapts to show that a
direct product of linearly compact modules is again linearly compact
(cf.\,\cite[II\,(27.2)]{Lefschetz}).
The equalizer of two morphisms in the category of linearly
topologized modules is the kernel of their difference,
a submodule which is closed if the codomain of the maps is Hausdorff.
In this situation,~(ii) shows
that if the domain module is linearly compact, the equalizer
is also linearly compact, completing the proof of~(iv).
\end{proof}

Note that by the second sentence of~(i) above, every finite-dimensional
vector space over a field $k$ is linearly compact under the
discrete topology.
In particular, linear compactness does not imply
ordinary compactness.
Bringing in~(iv), we see that an inverse limit of
discrete finite dimensional vector spaces is always linearly compact.

Over a general ring $R,$ are the artinian modules the only modules
linearly compact in the discrete topology?
Not necessarily: if $R$ is a complete discrete valuation ring which is
not a field, we see that as a
discrete $\!R\!$-module, $R$ is linearly compact.
(More on this example later.)

Here are some restrictions on linearly compact topologies.

\begin{lemma}[cf.\,{\cite[II\,(25.6),\,(27.5),\,(27.7)]{Lefschetz}}]\label{L.lcp=>}
Let $R$ be an associative unital ring.
\vspace{.2em}

\textup{(i)}~ If an $\!R\!$-module $M$ is artinian, then
the only Hausdorff linear topology on $M$ is the discrete
topology.\vspace{.2em}

\textup{(ii)}~ If an $\!R\!$-module $M$ is linearly compact
under the discrete topology, then $M$ does not contain a direct
sum of infinitely many nonzero submodules.
\vspace{.2em}

\textup{(iii)}~ Any linearly compact submodule $N$ of a linearly
topologized $\!R\!$-module $M$ is closed.
\vspace{.2em}

\textup{(iv)}~ Every linearly compact $\!R\!$-module
$M$ is the inverse limit of an inversely directed system of
discrete linearly compact $\!R\!$-modules.
\vspace{.2em}

\textup{(v)}~ In a linearly compact $\!R\!$-module, the sum
of any two closed submodules is closed.
\end{lemma}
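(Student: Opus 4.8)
The plan is to establish the five parts almost independently, deducing only~(v) from~(iii), and in every part to exploit the finite-intersection-property form of linear compactness. For part~(i): given a Hausdorff linear topology on an artinian module $M$, the open submodules form a nonempty family closed under finite intersections, so by the descending chain condition it has a minimal member $U_0$; intersecting $U_0$ with an arbitrary open submodule shows $U_0$ lies in every open submodule, whence $U_0=\bigcap\{U:U\ \text{open}\}=\overline{\{0\}}=\{0\}$ by Hausdorffness, so the topology is discrete. For part~(ii): suppose $M$ is discrete and linearly compact yet contains nonzero submodules $N_i$ $(i\in I,\ I$ infinite$)$ with $\sum_i N_i$ direct. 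Choose $0\ne x_i\in N_i$, put $D_j=\sum_{i\ne j}N_i$, and consider the cosets $x_j+D_j$ (cosets of submodules, all closed since $M$ is discrete). For any $j_1,\dots,j_n$ the element $x_{j_1}+\dots+x_{j_n}$ lies in every $x_{j_k}+D_{j_k}$, so this family has the finite intersection property; but an element of the whole intersection would lie in $D=\bigoplus_i N_i$, hence have finite support $F$, and, choosing $j\in I\setminus F$, would have $N_j$-component equal to both $0$ and $x_j\ne0$ — a contradiction. So linear compactness fails.

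For part~(iii), let $x\in\overline N$. For each open submodule $U$ of $M$ the coset $x+U$ meets $N$, and $(x+U)\cap N$ is a coset of $U\cap N$, which is open — hence closed — in $N$; these cosets inherit the finite intersection property from the $U$'s. By linear compactness of $N$ there is $n\in N$ with $n\in x+U$ for every open $U$, i.e.\ $n-x\in\bigcap_U U$. Since $M$ is Hausdorff this intersection is $\{0\}$, so $x=n\in N$. (Without Hausdorffness on $M$ the same computation gives only $\overline N\subseteq N+\overline{\{0\}}$, which suffices for us since $M$ is Hausdorff in all our applications.)

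For part~(iv), take the open submodules $U$ of $M$ ordered by reverse inclusion: a set, directed since it is closed under finite intersections. Each quotient $M/U$ is discrete (as $U$ is open) and is a continuous image of the linearly compact $M$, hence linearly compact by Lemma~\ref{L.lcp<=}(iii), and the maps $M/U\to M/U'$ for $U\subseteq U'$ form an inverse system. The canonical map $M\to\varprojlim_U M/U$ is injective because $\bigcap_U U=\overline{\{0\}}=\{0\}$, and surjective by a finite-intersection argument: a compatible family is represented by elements $x_U$ with $x_U-x_{U'}\in U'$ whenever $U\subseteq U'$, so the cosets $x_U+U$ of the closed submodules $U$ have the finite intersection property (the class of $x_{U_1\cap\dots\cap U_n}$ lies in every $x_{U_i}+U_i$), and any point of their intersection represents the given family; comparing basic open sets on the two sides shows the bijection is a homeomorphism. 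Finally, for part~(v), if $N_1,N_2$ are closed submodules of the linearly compact $M$, they are linearly compact by Lemma~\ref{L.lcp<=}(ii), hence so is $N_1\times N_2$ by Lemma~\ref{L.lcp<=}(iv); addition $N_1\times N_2\to M$ is continuous with image $N_1+N_2$, so by Lemma~\ref{L.lcp<=}(iii) the submodule $N_1+N_2$ is linearly compact in the subspace topology, and then by part~(iii) it is closed in $M$.

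I expect the only genuine obstacle to be part~(iii): one must observe that $(x+U)\cap N$ is a coset of a submodule of $N$ that is \emph{closed in $N$}, so that linear compactness of $N$ can be applied, and one must be mildly careful about Hausdorffness of the ambient $M$. Part~(iii) is also the step on which~(v) rests. The explicit witnessing family in~(ii) is the only other point requiring an idea rather than bookkeeping; parts~(i) and~(iv) are routine.
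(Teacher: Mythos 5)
Your proof is correct and follows essentially the same route as the paper's in all five parts: the artinian/finite-intersection argument for (i), the same witnessing family of cosets $x_j+\sum_{i\ne j}N_i$ for (ii), the same closure-point argument for (iii), the same $\varprojlim_U M/U$ construction for (iv), and the same deduction of (v) from (iii) via the addition map on $N_1\times N_2$. Your parenthetical in (iii) about Hausdorffness of the ambient $M$ correctly identifies the same implicit hypothesis the paper uses; no further comment is needed.
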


\begin{proof}[Sketch of proof]
(i): Hausdorffness implies that $\{0\}$ is the intersection of
all the open submodules of $M.$
By the artinian assumption, some finite intersection of these
is therefore zero.
A finite intersection of open submodules is open,
so $\{0\}$ is open, i.e., $M$ is discrete.

(ii):  If $M$ contains an infinite direct sum
$\bigoplus_{i\in I} N_i,$ and
each $N_i$ has a nonzero element $x_i,$ then the system of cosets
$C_i=x_i+\bigoplus_{j\in I-\{i\}} N_j$ $(i\in I)$ has the finite
intersection property (namely, $C_{i_1}\cap\dots\cap C_{i_n}$
contains $x_{i_1}+\dots+x_{i_n}),$ and all these sets are closed since
$M$ is discrete.
But they have no element in common:
such an element would lie in $\bigoplus_{i\in I} N_i$
since each $C_i$ does, but
would have to have a nonzero summand in each $N_i.$

(iii):  Suppose $x\in M$ is in the closure of the linearly
compact submodule $N.$
Then for every open submodule $M'\subseteq M,$ the coset
$x+M'$ has nonempty intersection with $N,$ and we see that
these intersections form a family of cosets within $N$ of
the submodules $N\cap M',$ which are clearly closed in $N.$
Linear compactness of $N$ implies that these sets have nonempty
intersection; but the intersection of the larger sets $x+M'$ is $\{x\}$
because $M$ is Hausdorff.
Hence $x\in N,$ showing that $N$ is closed.

(iv):  Note that the open submodules $N\subseteq M$ form an inversely
directed system under inclusion; let $M'=\lm_N\,M/N,$ the inverse limit
of the system of discrete factor-modules, with the
inverse-limit topology.
The universal property of the inverse limit gives us a
continuous homomorphism $f:M\to\nolinebreak M'.$

Now each point of $M'$ arises from a system of elements
of the factor-modules $M/N,$ equivalently, from a system
of cosets of the open submodules $N,$ having a compatibility relation
that implies the finite intersection property.
Using the linear compactness of $M,$ we deduce that $f$ is surjective.
Since $M$ is Hausdorff, the maps $M\to M/N,$ as $N$ ranges over the
open sets, separate points,
hence so does the single map $f:M\to M';$ so $f$ is also injective.

Finally, every open submodule $N$ of $M$ is the
inverse image of the open submodule $\{0\}\subseteq M/N,$
hence is the inverse image of an open submodule of $M';$ so
the topology of $M$ is no finer than that of $M',$
so $f$ is an isomorphism of topological modules.

(v):  Let $M_1$ and $M_2$ be closed submodules of the
linearly compact module $M.$
By~(iv) of the preceding lemma, the direct product
$M_1\times M_2$ is linearly compact under the product topology.
The map $M_1\times M_2\to M$ given by addition is continuous,
hence its image, $M_1+M_2,$ is linearly compact by point~(iii) of
that lemma, hence closed by~(iii) of this one.
\end{proof}

Returning to the curious case of a complete discrete valuation
ring $R,$ which we saw was linearly compact under the discrete topology,
one may ask, ``What about its valuation topology, which is
not discrete?''
The next lemma (not needed for the main results of this paper)
shows that $R$ is linearly compact under that topology as well.

\begin{lemma}\label{L.weaken}
Let $R$ be an associative unital ring, $M$ a left $\!R\!$-module,
and $T'\subseteq T$ Hausdorff linear topologies on $M.$

Then if $M$ is linearly compact under $T,$ it is also
linearly compact under $T'.$
In this situation, the same submodules \textup{(}but not,
in general, the same sets\textup{)} are closed in the two topologies.
\end{lemma}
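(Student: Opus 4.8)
The plan is to get everything out of the three preceding lemmas, with essentially no new work. For the first assertion: since $T'\subseteq T$, the identity map from $(M,T)$ to $(M,T')$ is continuous, and its image is all of $M$ carrying the topology $T'$, which is Hausdorff by hypothesis. So Lemma~\ref{L.lcp<=}(iii) (in the form of its last sentence, that a Hausdorff linear topology weaker than a linearly compact one is linearly compact) immediately gives that $(M,T')$ is linearly compact.

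For the claim about closed submodules, one direction is trivial: every $T'$-open submodule is $T$-open, so every $T'$-closed submodule is $T$-closed. For the converse, let $N$ be a submodule closed in $(M,T)$. By Lemma~\ref{L.lcp<=}(ii), $N$ is linearly compact in the topology induced from $T$. The topology induced on $N$ from $T'$ is weaker than this, and is still Hausdorff, so another application of Lemma~\ref{L.lcp<=}(iii) shows $N$ is linearly compact in the topology induced from $T'$ as well. Thus $N$ is a linearly compact submodule of the linearly topologized module $(M,T')$, and Lemma~\ref{L.lcp=>}(iii) tells us it is closed there. This proves the two topologies have the same closed submodules; the parenthetical assertion that they need not have the same closed \emph{sets} calls for no argument beyond an example, e.g.\ the complete discrete valuation ring $R$ discussed above with $T$ discrete and $T'$ the valuation topology, where $\{t^n:n\geq 0\}$ is $T$-closed but has $0$ in its $T'$-closure.

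There is no real obstacle here: the substance of the lemma has already been isolated in Lemmas~\ref{L.lcp<=} and~\ref{L.lcp=>}. The only thing to watch is bookkeeping — keeping straight that $T'\subseteq T$ makes $T'$ the \emph{weaker} topology, and invoking the correct part of the correct lemma in the correct direction (Lemma~\ref{L.lcp<=}(ii) to pass from compactness of $M$ to compactness of the closed submodule $N$ under $T$, then Lemma~\ref{L.lcp<=}(iii) to weaken to $T'$, then Lemma~\ref{L.lcp=>}(iii) to conclude $N$ is $T'$-closed).
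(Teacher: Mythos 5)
Your proof is correct and follows essentially the same route as the paper's: the first assertion is exactly the second sentence of Lemma~\ref{L.lcp<=}(iii), and the claim about closed submodules is obtained by the same chain Lemma~\ref{L.lcp<=}(ii) $\to$ Lemma~\ref{L.lcp<=}(iii) $\to$ Lemma~\ref{L.lcp=>}(iii) applied to a $T$-closed submodule $N$. The only (immaterial) difference is the witness for the parenthetical remark: the paper uses the set $M-\{0\}$ in a complete discrete valuation ring, where you use $\{t^n: n\geq 0\}$; both work.
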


\begin{proof}
The first assertion is the content
of the second sentence of Lemma~\ref{L.lcp<=}(iii).

Now if a submodule $N\subseteq M$ is closed under $T,$
then by Lemma~\ref{L.lcp<=}(ii) it is linearly compact
under the topology induced on it by $T,$ hence, by
the above observation with $N$ in place of $M,$ also under
the weaker topology induced by $T'.$
Hence by Lemma~\ref{L.lcp=>}(iii),
it is closed in $M$ under $T'.$
The reverse implication follows from the assumed
inclusion of the topologies, giving the second assertion of the lemma.

For $R$ a complete discrete valuation ring which is not
a field, and $M=R,$
observe that $M-\{0\}$ is closed in the discrete topology but
not in the valuation topology, yielding the parenthetical
qualification.
\end{proof}

We now note some stronger statements that are true when $R$ is a field.
(The proofs of (i), (ii) and (iv) below work, with appropriately
adjusted language, for arbitrary $R$ if ``vector space'' is changed
to ``semisimple module'', i.e., direct sum of simple modules.)

\begin{lemma}[cf.\,{\cite[II\,(27.7), (32.1)]{Lefschetz}}]\label{L.lncp/fd}
Let $k$ be a field and $V$ a topological $\!k\!$-vector-space.
Then
\vspace{.2em}

\textup{(i)}~ If $V$ is discrete, it is linearly compact if
and only if it is finite-dimensional.
\vspace{.2em}

\textup{(ii)}~ If $V$ is linearly compact, then its open subspaces
are precisely its closed subspaces of finite
codimension.\vspace{.2em}

\textup{(iii)}~ The following conditions are equivalent:
\textup{(a)}~ $V$ is linearly compact.
\textup{(b)}~ $V$ is the inverse limit of an inversely directed
system of finite-dimensional discrete vector spaces.
\textup{(c)}~ Up to isomorphism of topological vector spaces,
$V$ is the direct product $k^I$ of a family of copies of $k,$
each given with the discrete topology.
\vspace{.2em}

\textup{(iv)}~ If $V$ is linearly compact, then no strictly
weaker or stronger topology on $V$ makes it linearly compact.
Equivalently, no linear topology strictly weaker than a
linearly compact topology is Hausdorff.
\end{lemma}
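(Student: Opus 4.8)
The plan is to get parts~(i), (ii) and~(iv) cheaply from the lemmas already established, and to put the real work into the implication (a)$\Rightarrow$(c) of part~(iii).

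Part~(i): the ``if'' direction is the remark after Lemma~\ref{L.lcp<=} (a finite-dimensional space is artinian, hence linearly compact discrete). For ``only if'' I argue contrapositively: an infinite-dimensional $V$ has an infinite basis and so contains an infinite direct sum of nonzero subspaces, which by Lemma~\ref{L.lcp=>}(ii) rules out linear compactness in the discrete topology. Part~(ii): if $N\subseteq V$ is open then it is closed, and the discrete space $V/N$, being a continuous image of the linearly compact $V$, is linearly compact by Lemma~\ref{L.lcp<=}(iii), hence finite-dimensional by~(i); so $N$ has finite codimension. Conversely, if $W\subseteq V$ is closed of finite codimension, then $V/W$ is Hausdorff (routine, since $W$ is closed) and artinian, hence discrete by Lemma~\ref{L.lcp=>}(i), so $W$ --- the preimage of $\{0\}$ --- is open. (Only the forward step, from openness to finite codimension, uses linear compactness of $V$.)

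The core is part~(iii). The implications (c)$\Rightarrow$(b) (write $k^I=\lm_F k^F$ over the finite subsets $F\subseteq I$) and (b)$\Rightarrow$(a) (Lemma~\ref{L.lcp<=}(iv), with~(i) for the factors) are immediate, so I must prove (a)$\Rightarrow$(c). Let $W$ be the space of continuous linear functionals $V\to k$ with $k$ discrete. By~(ii) every open subspace of $V$ has finite codimension and hence is a finite intersection of kernels of members of $W$; as $V$ is Hausdorff it follows that $W$ separates the points of $V$. Fix a $k$-basis $(\phi_i)_{i\in I}$ of $W$ and let $F\colon V\to k^I$ send $v$ to $(\phi_i(v))_{i\in I}$. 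Then $F$ is linear, injective (as $W$ separates points), and continuous (the preimage of a basic open subspace of $k^I$ is a finite intersection of $\ker\phi_i$'s); by Lemma~\ref{L.lcp<=}(iii) the image $F(V)$ is linearly compact, hence by Lemma~\ref{L.lcp=>}(iii) closed in $k^I$. I expect surjectivity of $F$ to be the main obstacle: I would obtain it by showing $F(V)$ is \emph{dense} in $k^I$ --- for any finitely many $i_1,\dots,i_n$ the functionals $\phi_{i_1},\dots,\phi_{i_n}$ are linearly independent, so $v\mapsto(\phi_{i_1}(v),\dots,\phi_{i_n}(v))$ maps $V$ onto $k^n$ --- whereupon closed $+$ dense in the Hausdorff space $k^I$ forces $F(V)=k^I$. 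Finally, $F$ is an open map: by~(ii) each open $N\subseteq V$ is closed of some finite codimension $d$, so $N$ is the common kernel of $d$ members of $W$, and writing these in the basis $(\phi_i)$ shows $F(N)$ is cut out in $k^I$ by finitely many linear equations in finitely many coordinates; hence $F(N)$ contains a basic open subspace and, being a subspace, is open. Thus $F$ is an isomorphism of topological vector spaces, proving (c).

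Part~(iv): suppose $T'\subseteq T$ are Hausdorff linear topologies on $V$ with $T$ linearly compact. By Lemma~\ref{L.weaken}, $V$ is linearly compact under $T'$ and $T,T'$ have the same closed subspaces; since ``finite codimension'' is purely algebraic, part~(ii) gives that $T$ and $T'$ have the same open subspaces, hence the same topology (a linear topology being determined by its open subspaces), so $T=T'$. This forbids a strictly weaker linearly compact --- or merely Hausdorff linear --- topology; a strictly stronger linearly compact topology $T''$ is excluded too, since then $T$ itself would be a strictly weaker Hausdorff linear topology beneath the linearly compact $T''$. The ``equivalently'' clause then follows because linear compactness includes Hausdorffness.
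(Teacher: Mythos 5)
Your proof is correct, and parts~(i), (ii) and~(iv) follow essentially the same path as the paper (the ``only if'' of~(i) via Lemma~\ref{L.lcp=>}(ii), part~(ii) via the observation that a subspace is open iff it is closed with discrete quotient, and part~(iv) via Lemma~\ref{L.weaken} combined with~(ii)). Where you genuinely diverge is in part~(iii). The paper runs the cycle (a)$\Rightarrow$(b)$\Rightarrow$(c)$\Rightarrow$(a), getting (a)$\Rightarrow$(b) from Lemma~\ref{L.lcp=>}(iv) together with~(i), and disposing of (b)$\Rightarrow$(c) in one terse sentence (``a direct product of direct products is a direct product''), effectively leaning on the cited result of Lefschetz for the fact that an inverse limit of finite-dimensional discrete spaces is topologically a power of $k$. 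You instead prove (a)$\Rightarrow$(c) directly: you take a basis $(\phi_i)_{i\in I}$ of the continuous dual, use~(ii) to see that the continuous functionals separate points and that every open subspace is a finite intersection of their kernels, and then verify that $v\mapsto(\phi_i(v))_{i\in I}$ is injective, continuous, surjective (linearly compact image is closed by Lemmas~\ref{L.lcp<=}(iii) and~\ref{L.lcp=>}(iii), and dense because finitely many independent functionals map $V$ onto $k^n$), and open. All of these steps check out, including the use of surjectivity in the openness argument. What your route buys is a self-contained proof of the structure theorem (a)$\Rightarrow$(c) --- essentially Lefschetz's (32.1) --- at the cost of being longer; what the paper's route buys is brevity, at the cost of leaving the inverse-limit-to-product step to the reference.
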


\begin{proof}
(i): ``If'' holds by Lemma~\ref{L.lcp<=}(i),
``only if'' by Lemma~\ref{L.lcp=>}(ii).

(ii): This follows from the fact that a submodule $N$
of a linearly topologized module $M$ is open if and only if
it is closed and $M/N$ is discrete, together with (i).

(iii):  (a)$\!\implies\!$(b) holds by Lemma~\ref{L.lcp=>}(iv),
and~(i) above.
To see (b)$\!\implies\!$(c), note that a discrete finite-dimensional
vector space is a finite
direct product of copies of $k$ under its discrete topology,
and that a direct product of direct products is a direct product.
(c)$\!\implies\!$(a) follows from Lemma~\ref{L.lcp<=}(iv),\,(i).

(iv):  If $T$ is a linearly compact topology on $V,$ then under any
Hausdorff linear topology $T'\subseteq T,$ Lemma~\ref{L.weaken} tells
us that $V$ will again
be linearly compact, with the same closed subspaces.
Hence the same subspaces will be closed of finite codimension,
i.e., by (ii), open; so the topologies are the same.
This gives the second sentence of~(iv), which in view
of Lemma~\ref{L.weaken} is equivalent to the first.
\end{proof}

For the remainder of this note we will study algebras,
assuming our base ring is a field $k,$ though many of the arguments
could be carried out for more general commutative base rings.
We record the straightforward result:

\begin{lemma}\label{L.XA+AY}
Let $A=\lm_i A_i$ be the inverse limit of an inversely directed
system of $\!k\!$-algebras.
Then the multiplication of $A$ is bicontinuous
in the inverse limit topology.\qed
\end{lemma}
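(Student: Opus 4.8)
The plan is to describe the inverse-limit topology on $A$ by an explicit neighborhood basis built from the structure maps, and then to invoke the single algebraic fact that the projections $\pi_i\colon A\to A_i$ are algebra homomorphisms. Give each $A_i$ the discrete topology (this is what ``the inverse limit topology'' refers to here; were the $A_i$ instead carrying linear topologies with continuous multiplication, the argument below would go through verbatim with open sets in place of singletons). As a subspace of $\prod_i A_i$ with the product topology, $A$ then has a subbasis of open sets consisting of the preimages $\pi_i^{-1}(\{c\})$, $c\in A_i$. Since the indexing system is inversely directed, for any finitely many indices $i_1,\dots,i_n$ there is an index $j$ mapping down to all of them, and then $\pi_j^{-1}(\{\pi_j(a)\})\subseteq\bigcap_\ell\pi_{i_\ell}^{-1}(\{\pi_{i_\ell}(a)\})$; hence at each point $a\in A$ the single cosets $a+\ker\pi_i=\pi_i^{-1}(\{\pi_i(a)\})$ already form a neighborhood basis.

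With that in hand I would note that each $\pi_i$, being a homomorphism of $k$-algebras, satisfies $\pi_i(xy)=\pi_i(x)\,\pi_i(y)$ for all $x,y\in A$, and then verify continuity of multiplication $m\colon A\times A\to A$, $(x,y)\mapsto xy$, at an arbitrary point $(a,b)$. Given a basic neighborhood of $ab$ --- which we may take to be $W=\pi_i^{-1}(\{\pi_i(a)\pi_i(b)\})$ for some index $i$ --- set $U=\pi_i^{-1}(\{\pi_i(a)\})$ and $V=\pi_i^{-1}(\{\pi_i(b)\})$; these are open neighborhoods of $a$ and $b$, and for all $x\in U$ and $y\in V$ we have $\pi_i(xy)=\pi_i(x)\pi_i(y)=\pi_i(a)\pi_i(b)$, so $xy\in W$. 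Thus $m(U\times V)\subseteq W$, and $m$ is continuous. Continuity in each variable separately follows immediately (or may be obtained the same way by fixing one variable), and the continuity of addition and scalar multiplication is inherited from the product topology on $\prod_i A_i$; so $A$ is a topological $k$-algebra.

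I do not anticipate any genuine obstacle here --- this is the ``straightforward result'' the text promises, with no estimates or limiting arguments. The only steps needing any care are stating the neighborhood basis correctly, where inverse-directedness of the index set is used, and the observation that the projections respect multiplication, which is exactly what lets a preimage neighborhood $W$ be controlled by preimages $U,V$ under the same map $\pi_i$.
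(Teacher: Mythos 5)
Your proof is correct; the paper marks this lemma with \qed and supplies no argument of its own, calling it a ``straightforward result,'' and your argument --- reducing to the neighborhood basis of cosets $a+\ker\pi_i$ via inverse-directedness and then using that each projection $\pi_i$ is an algebra homomorphism --- is exactly the standard verification the author is implicitly invoking. No gaps.
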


(A very important aspect of the theory of linearly compact
vector spaces which does not come into this note is
the duality between the category of such spaces and the category
of discrete vector spaces; cf.\ \cite[Proposition~24.8]{coalg}.
More generally, \cite[(29.1)]{Lefschetz} establishes a
{\em self}\/-duality
for the category of {\em locally} linearly compact spaces, i.e.,
extensions of linearly compact spaces by discrete spaces.)

\section{Nilpotent algebras and $\!m\!$-separating monomials}\label{S.separate}
Let $A$ be an algebra over a field $k.$
If $B$ and $C$ are $\!k\!$-subspaces
of $A,$ we denote by $BC$ the $\!k\!$-subspace
spanned by all products $bc$ $(b\in B,\,c\in C).$

We define recursively $\!k\!$-subspaces
$A_{(n)}$ $(n=1,2,\dots)$ of $A$ by
\begin{equation}\begin{minipage}[c]{23pc}\label{d.A_(n)}
$A_{(1)}\,\ =\ A,\qquad A_{(n+1)}\ =
\ \sum_{0<m<n+1}\,A_{(m)}\,A_{(n+1-m)}.$
\end{minipage}\end{equation}

It is easy to see by induction (without assuming $A$
associative!)\ that for $n>0,$ $A_{(n+1)}\subseteq A_{(n)}.$
These subspaces are ideals, since
$A_{(n)}A=A_{(n)}A_{(1)}\subseteq A_{(n+1)}\subseteq A_{(n)},$
and similarly $A\,A_{(n)}\subseteq A_{(n)}.$
The algebra $A$ is said to be {\em nilpotent} if $A_{(n)}=\{0\}$
for some $n\geq 1.$
(Some other formulations of the condition of nilpotence,
which we will not need here,
are shown equivalent to this one in \cite[\S4]{pro-np}.)
\vspace{.5em}

Let me now preview the proof of our main result in an easy
case, that of associative algebras.

Suppose $A$ is an inverse limit of finite-dimensional associative
$\!k\!$-algebras, and that some finitely generated subalgebra
$S\subseteq A,$ say generated by $g_1,\dots,g_r,$
is dense in $A$ under the inverse limit topology.

For any $n>1,$ every element of $S_{(n)}$ is a
linear combination of monomials of lengths $N\geq n$ in the given
generators, each of which may be factored
$a\,g_{i_1}\dots\,g_{i_{n-1}},$
where $a$ is a product of $N-(n-1)$ generators.
It follows that
\begin{equation}\begin{minipage}[c]{23pc}\label{d.Sgn}
$S_{(n)}\ =\ \sum_{i_1,\dots,i_{n-1}\in\{1,\dots,r\}}
\ S\,g_{i_1}\dots\,g_{i_{n-1}}.$
\end{minipage}\end{equation}
Also, $S$ is spanned modulo $S_{(n)}$ by the finitely many
monomials in $g_1,\dots,g_r$ of lengths $<n;$ hence
by Lemma~\ref{L.lcp=>}(v), $A,$ the closure of $S,$ is spanned
modulo the closure of $S_{(n)}$ by those same monomials.
In particular,
\begin{equation}\begin{minipage}[c]{23pc}\label{d.clSgn}
The closure of~(\ref{d.Sgn}) in $A$ has finite codimension in~$A.$
\end{minipage}\end{equation}

Now consider
\begin{equation}\begin{minipage}[c]{23pc}\label{d.Agn}
$\sum_{i_1,\dots,i_{n-1}\in\{1,\dots,r\}}
\ A\,g_{i_1}\dots\,g_{i_{n-1}}.$
\end{minipage}\end{equation}
Since the maps $a\mapsto a\,g_{i_1}\dots g_{i_{n-1}}$ are continuous,
the above sum is closed in $A$
(by Lemmas~\ref{L.lcp<=}(iii), \ref{L.lcp=>}(iii)
and~\ref{L.lcp=>}(v)); and it obviously
contains~(\ref{d.Sgn}) and is contained in $A_{(n)}.$
So by~(\ref{d.clSgn}),
$A_{(n)}$ contains a closed subspace of finite codimension
in $A;$ hence it is open by Lemma~\ref{L.lncp/fd}(ii).

Suppose, now, that $f$ is a homomorphism from $A$ to a nilpotent
discrete algebra.
Then $\ker f$ must contain some $A_{(n)},$
hence will be open, hence $f$ will be continuous.

Finally, by the result from \cite{pro-np} mentioned in the
Introduction,
if $A$ is an inverse limit of nilpotent algebras, then any
finite-dimensional homomorphic image of $A$ {\em is} nilpotent;
so in that case, any
homomorphism of $A$ to a finite-dimensional algebra is
continuous.\vspace{.5em}

A key aspect of the above argument was that we were able to express
the general length-$\!N\!$ monomial $(N\geq n)$ in our generators
as the image of a general monomial $a$ under one of a fixed
finite set of linear operators (in this case, those of the form
$a\mapsto a\,g_{i_1}\dots\,g_{i_{n-1}})$
defined using multiplications by our generators.
For an arbitrary finitely generated
not-necessarily-associative algebra, no such
decomposition is possible, and we will see that
our main result does not apply to algebras in arbitrary varieties.
What we shall show next, however, is that {\em if} the identities
of our algebra allow us to handle, in roughly this way,
elements of $S_{(2)}=SS,$ then, as
above, we can do the same for all $S_{(n)}.$

We need some terminology.
By a {\em monomial} we shall mean an expression representing
a bracketed product of indeterminates.
E.g., $(xy)z$ and $x(yz)$ are distinct monomials.
(Since our algebras are not unital, we do not allow an empty monomial.
We do allow monomials involving repeated indeterminates.)
The {\em length} of a monomial will mean its length
as a string of letters, ignoring parentheses;
e.g., $\length((xy)z)=3.$

Note that every monomial $w$ of length $>1$ is in a unique way
a product of two monomials, $w=w'w''.$
Let us define recursively the {\em submonomials} of a monomial $w:$
Every monomial is a submonomial of itself,
and if $w=w'w'',$ then the submonomials of $w$
other than $w$ are the submonomials of $w'$ and the
submonomials of $w''.$
For example, the submonomials of $(xy)z$ include $xy,$ but not $yz.$

We now define a technical concept that we shall need.

\begin{definition}\label{D.separating}
If $w$ is a monomial and $m$ a natural number, we shall
say that $w$ is {\em $\!m\!$-separating} if $w$ has a
submonomial of length exactly $\length(w)-m.$

If $n\leq N$ are natural numbers, we shall call
$w$ {\em $\![n,N]\!$-separating} if it is $\!m\!$-separating
for some $m\in[n,N]=\{n,n+1,\dots,N\}.$
\end{definition}

For example, note that $((x_1 x_2)(x_3 x_4))((x_5 x_6)(x_7 x_8))$
has submonomials only of lengths $8,4,2$ and $1,$ hence it is
$\!m\!$-separating only for $m=0,4,6,7;$
in particular, it is not $\![1,3]\!$-separating.
On the other hand, $(((x_1 x_2)(x_3 x_4))((x_5 x_6)(x_7 x_8)))\,x_9,$
being of length $9$ and having a submonomial of length $8,$ is
$\!1\!$-separating, hence it is $\![1,3]\!$-separating.

Recall that a {\em variety} of algebras means the class of
all algebras satisfying a fixed set of identities.
An identity for algebras may be written as saying that a certain
linear combination of monomials evaluates to zero.
Such an identity is called {\em homogeneous} if
the monomials in question all have the
same number of occurrences of each variable.
(It is easy to show that any variety of algebras over
an infinite field is determined by homogeneous identities.
An example of a variety of algebras over a finite field which
is not so determined is that of Boolean rings,
regarded as algebras over $\mathbb{Z}/2\mathbb{Z};$
the identity $x^2=x$ is not a consequence of homogeneous identities
of that variety.
In this note, only homogeneous identities will interest us.)

Note that the variety of {\em associative} algebras has identities
which equate every monomial with a \mbox{$\!1\!$-separating}
monomial.
From this one can obtain identities
which equate every monomial of length $\geq n$ with an
$\!n\!$-separating monomial; this fact underlies the sketch just given
of the proof of our main result for associative algebras.
Here is the analogous relationship for general varieties.

\begin{lemma}\label{L.n+d-sep}
Suppose $\V$ is a variety of algebras, and $d$ a positive
integer, such that for every monomial $w$ of
degree $>1,$ $\V$ satisfies
a homogeneous identity equating $w$ with a $\!k\!$-linear
combination of $[1,1{+}d\,]\!$-separating monomials.

Then for every positive integer $n$ and every monomial $w$
of length $>n,$ $\V$ satisfies a homogeneous identity equating
$w$ with a $\!k\!$-linear combination of $\![n,n{+}d\,]\!$-separating
monomials.
\end{lemma}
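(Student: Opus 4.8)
The plan is to induct on $n$, with the hypothesis of the lemma (the case $d$ fixed, $n=1$, but stated for an arbitrary monomial $w$ of degree $>1$) serving as the base case $n=1$. Assume the conclusion holds for some $n\geq 1$: every monomial $w$ of length $>n$ equals, modulo the identities of $\V$, a $k$-linear combination of $[n,n{+}d\,]$-separating monomials. I want to deduce the same statement with $n$ replaced by $n+1$. So fix a monomial $w$ with $\length(w)>n+1$; in particular $\length(w)>n$, so by the inductive hypothesis $w$ is a $\V$-linear-combination of $[n,n{+}d\,]$-separating monomials. It therefore suffices to treat a single $[n,n{+}d\,]$-separating monomial $v$ with $\length(v)>n+1$ and show it is a $\V$-combination of $[n{+}1,n{+}1{+}d\,]$-separating monomials. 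If $v$ happens already to be $[n{+}1,n{+}1{+}d\,]$-separating we are done, so we may assume $v$ is $m$-separating only for $m=n$ among the relevant values — that is, $v$ has a submonomial $u$ of length exactly $\length(v)-n$, but no submonomial of any length in $[\length(v)-(n{+}1{+}d),\length(v)-(n{+}1)]$.

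The key idea is to apply the base-case hypothesis \emph{inside} the submonomial $u$. Since $\length(u)=\length(v)-n$ and $\length(v)>n+1$, we have $\length(u)>1$, so by hypothesis $u$ equals a $\V$-combination of $[1,1{+}d\,]$-separating monomials $u_\alpha$, each with the same multiset of letters as $u$, hence each of length $\length(u)$. Substituting each $u_\alpha$ back into the position that $u$ occupies in $v$ produces monomials $v_\alpha$, of length $\length(v)$, whose $\V$-combination equals $v$ (this substitution is legitimate because a homogeneous identity remains an identity under substitution of the variables by monomials, and the ambient bracketing around $u$ in $v$ is fixed). It remains to check that each $v_\alpha$ is $[n{+}1,n{+}1{+}d\,]$-separating. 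Here is where the arithmetic does the work: $u_\alpha$, being $[1,1{+}d\,]$-separating, has a submonomial $t$ of some length $\length(u_\alpha)-j$ with $j\in[1,1{+}d\,]$; but $t$ is then also a submonomial of $v_\alpha$, of length $\length(u_\alpha)-j=\length(u)-j=\length(v)-n-j=\length(v_\alpha)-(n+j)$, and $n+j\in[n{+}1,n{+}1{+}d\,]$. So $v_\alpha$ is $(n{+}j)$-separating with $n+j$ in the desired range, as wanted.

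The one genuine subtlety — the step I expect to be the main obstacle to get cleanly right — is the bookkeeping around substitution and submonomials: one must be sure that when $u_\alpha$ is plugged into $v$ in place of the submonomial $u$, every submonomial of $u_\alpha$ really is a submonomial of the resulting $v_\alpha$ (this follows from the recursive definition of ``submonomial'', since submonomials of a submonomial are submonomials), and that homogeneity of the identity $u=\sum c_\alpha u_\alpha$ is preserved under the outer bracketing so that the resulting relation $v=\sum c_\alpha v_\alpha$ is again a homogeneous identity of $\V$. Once those points are pinned down, the length computation above is the entire content of the inductive step, and the lemma follows. I would also remark that the base case is exactly the hypothesis of the lemma stated with $n=1$ (noting $[1,1{+}d\,]=[n,n{+}d\,]$ when $n=1$), so no separate argument is needed there.
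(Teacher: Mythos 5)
Your proposal is correct and follows essentially the same route as the paper's proof: an induction on $n$ whose inductive step isolates a monomial that is $n$-separating but not $[n{+}1,n{+}1{+}d\,]$-separating, applies the $n=1$ hypothesis to its length-$(\length(v)-n)$ submonomial, and re-substitutes, with the same arithmetic $n+j\in[n{+}1,n{+}1{+}d\,]$ closing the argument. The two points you flag as subtleties (submonomials of a substituted submonomial remain submonomials, and homogeneity is preserved) are exactly the ones the paper relies on, so nothing is missing.
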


\begin{proof}
The case $n=1$ is the hypothesis.
Given $n>1,$ assume inductively that the result is true
for $n-1,$ and let $w$ be a monomial of length $>n.$

By our inductive assumption, $w$ is congruent modulo
homogeneous identities of $\V$ to a linear combination of
$\![n{-}1,\,n{-}1{+}d\,]\!$-separating monomials.
By homogeneity of the identities in question, these monomials can
all be assumed to have length equal to $\length(w).$
Suppose $w'$ is one of these monomials which is not
$\![n,n{+}d\,]\!$-separating.
Since it {\em is} $\![n{-}1,\,n{-}1{+}d\,]\!$-separating, it must
be $\!(n{-}1)\!$-separating; so it has a submonomial $w''$ of
length $\length(w)-(n-1).$
By our original hypothesis, this $w''$ is congruent modulo homogeneous
identities of $\V$ to a linear combination of
$\![1,1{+}d\,]\!$-separating monomials.
If we substitute this expression for the submonomial $w''$ into
$w',$ we get an expression for the latter as a linear combination
of $\![1{+}(n{-}1),1{+}d{+}(n{-}1)\,]\!$-separating,
i.e., $\![n,n{+}d\,]\!$-separating monomials.
Doing this for each such $w',$ we get the desired expression
for $w$ modulo the identities of $\V.$
\end{proof}

The hypothesis of the preceding lemma will be used throughout the
remainder of this note, so let us give it a name.

\begin{definition}\label{D.sep}
We shall call a variety $\V$ of $\!k\!$-algebras
{\em $\![1,1{+}d\,]\!$-separative} if every monomial of
degree $>1$ is congruent modulo homogeneous identities of $\V$ to a
$\!k\!$-linear combination of $[1,1{+}d\,]\!$-separating monomials.
We shall call $\V$ {\em separative} if it is
$\![1,1{+}d\,]\!$-separative for some natural number $d.$
\end{definition}

\section{The main theorem}\label{S.main}

The proof of the next result, our main theorem,
follows the outline sketched at
the beginning of the preceding section; but we will give details,
and use Lemma~\ref{L.n+d-sep} in place of familiar properties
of associativity.

\begin{theorem}\label{T.main}
Let $\V$ be a separative variety of $\!k\!$-algebras, and
$A\in\V$ an inverse limit of finite-dimensional
algebras; and suppose $A$ has a finitely generated subalgebra $S$
which is dense in the inverse limit topology on $A.$
Then
\vspace{.2em}

\textup{(i)}~ For all $n>0,$ the ideal $A_{(n)}$ is open in the
inverse limit topology on $A.$
Hence,
\vspace{.2em}

\textup{(ii)}~ Every homomorphism of $A$ into a nilpotent
$\!k\!$-algebra $B$ is continuous \textup{(}with respect to
the discrete topology on $B).$
Hence, by \textup{\cite[Theorem~10(iii)]{pro-np}},
\vspace{.2em}

\textup{(iii)}~ If the algebras of which $A$ is obtained
as an inverse limit are all nilpotent, then
every homomorphism from $A$ to a finite-dimensional
$\!k\!$-algebra $B$ is continuous.
\end{theorem}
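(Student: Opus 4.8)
The plan is to follow the sketch given for associative algebras at the start of Section~\ref{S.separate}, using Lemma~\ref{L.n+d-sep} in place of the explicit factorization $a\mapsto a\,g_{i_1}\cdots g_{i_{n-1}}$. Throughout, note that $A$, being an inverse limit of discrete finite-dimensional vector spaces, is linearly compact, so Lemmas~\ref{L.lcp<=}--\ref{L.lncp/fd} apply to it and to its closed subspaces, and its multiplication is bicontinuous by Lemma~\ref{L.XA+AY}. For~(i), the case $n=1$ is trivial since $A_{(1)}=A$, so fix $n\geq 2$ and let $g_1,\dots,g_r$ generate $S$. First I would record that $S_{(n)}$ is spanned by the elements $w(g_1,\dots,g_r)$ with $w$ ranging over monomials of length $\geq n$ (since $S_{(n)}$ is the image of $F_{(n)}$ under the canonical map from the relatively free $\V$-algebra $F$ on $r$ generators, and $F_{(n)}$, like its analogue in the absolutely free algebra, is spanned by the monomials of length $\geq n$). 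As $\V$ is $[1,1{+}d\,]$-separative for some $d$, Lemma~\ref{L.n+d-sep}, applied with its ``$n$'' equal to $n-1\geq 1$, shows that each such $w$ is congruent modulo homogeneous identities of $\V$ to a $k$-linear combination of $[n{-}1,\,n{-}1{+}d\,]$-separating monomials, which by homogeneity may be taken of length $\length(w)$. Each such separating monomial $w'$ has a submonomial $v$ of length $\length(w')-m$ for some $m\in[n{-}1,\,n{-}1{+}d\,]$, and so is obtained by substituting $v$ for the single hole of a ``context'' $u$, i.e.\ a bracketed expression in $g_1,\dots,g_r$ and one hole $\square$, where $u$ has exactly $m$ occurrences of generators.

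Let $\mathcal U$ be the finite set of contexts with between $n-1$ and $n-1+d$ occurrences of generators, let $T_u\colon A\to A$ be the linear map $a\mapsto u(a)$ for $u\in\mathcal U$, and put $C=\sum_{u\in\mathcal U}T_u(A)$. Three facts complete~(i). (a)~$C\subseteq A_{(n)}$: a short induction on the structure of $u$ gives $T_u(A)\subseteq A_{(m+1)}$ when $u$ has $m$ occurrences of generators, and $m\geq n-1$ forces $A_{(m+1)}\subseteq A_{(n)}$. (b)~$C$ is closed: each $T_u$ is continuous (a composite of multiplications by generators), so $T_u(A)$ is a continuous image of the linearly compact $A$, hence linearly compact by Lemma~\ref{L.lcp<=}(iii) and closed by Lemma~\ref{L.lcp=>}(iii); a finite sum of closed subspaces of the linearly compact $A$ is closed by Lemma~\ref{L.lcp=>}(v). (c)~$C$ has finite codimension in $A$: by the previous paragraph every spanning monomial of $S_{(n)}$ lies in $\sum_{u\in\mathcal U}T_u(S)\subseteq C$, so $S_{(n)}\subseteq C$, whence $\overline{S_{(n)}}\subseteq C$ since $C$ is closed; but $S$ is spanned modulo $S_{(n)}$ by the finitely many monomials in $g_1,\dots,g_r$ of length $<n$, so (arguing as for~(\ref{d.clSgn}), via Lemma~\ref{L.lcp=>}(v)) $A=\overline S$ is spanned modulo $\overline{S_{(n)}}$, hence modulo $C$, by those same monomials. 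Thus $A_{(n)}$ contains the closed, finite-codimension subspace $C$, which is open by Lemma~\ref{L.lncp/fd}(ii); so $A_{(n)}$ is open.

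For~(ii), if $B$ is nilpotent then $B_{(N)}=\{0\}$ for some $N$, and any homomorphism $f\colon A\to B$ satisfies $f(A_{(n)})\subseteq B_{(n)}$ for all $n$ by an immediate induction from~(\ref{d.A_(n)}); hence $A_{(N)}\subseteq\ker f$, which is then open by~(i), so $f$ is continuous. For~(iii), if $A$ is an inverse limit of nilpotent algebras, then by \cite[Theorem~10(iii)]{pro-np} the image $f(A)$ of any homomorphism $f$ to a finite-dimensional $B$ is nilpotent, so~(ii) makes $f\colon A\to f(A)$ continuous for the discrete topology on $f(A)$, and composing with the inclusion $f(A)\hookrightarrow B$ gives the claim. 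The step requiring the most care is~(c) above: one must invoke Lemma~\ref{L.n+d-sep} at index $n-1$ (not $n$) so that the contexts produced have at least $n-1$, and at most $n-1+d$, occurrences of generators, making $C\subseteq A_{(n)}$ and $S_{(n)}\subseteq C$ hold simultaneously; the rest is routine bookkeeping with the ideals $A_{(m)}$ and the linear-compactness lemmas.
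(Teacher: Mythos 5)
Your proposal is correct and follows essentially the same route as the paper's proof: applying Lemma~\ref{L.n+d-sep} at index $n-1$, forming the finite family of operators $a\mapsto u(g_1,\dots,g_r,a)$ (your ``contexts'' are the paper's set $U_{n,d}$), and using Lemmas~\ref{L.lcp<=}(iii), \ref{L.lcp=>}(iii),(v) and~\ref{L.lncp/fd}(ii) to see that the sum of their images is a closed subspace of finite codimension inside $A_{(n)}$. The only cosmetic difference is that you conclude openness of $A_{(n)}$ from its containing the open subspace $C$, whereas the paper first identifies $A_{(n)}$ with the closure of $S_{(n)}$; both are sound.
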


\begin{proof}
(i): Given $n,$ we wish to prove $A_{(n)}$ open.
Since $A_{(1)}=A,$ we may assume $n>1.$

By the fact that $A$ is the closure of $S,$
and continuity of the operations of $A,$ we have
\begin{equation}\begin{minipage}[c]{23pc}\label{d.clS_()}
The closure of $S_{(n)}$ contains $A_{(n)}.$
\end{minipage}\end{equation}

Let $S$ be generated by $g_1,\dots,g_r.$
Then $S_{(n)}$ is spanned as a $\!k\!$-vector
space by (variously bracketed)
products of these elements of lengths $\geq n.$
Let us understand an ``$\!m\!$-separating product'' of
$g_1,\dots,g_r$ to mean an element of $S$ obtained by
substituting one of $g_1,\dots,g_r$
for each indeterminate in an $\!m\!$-separating monomial.
Taking a $d$ such that $\V$ is $\![1,1{+}d\,]\!$-separative,
Lemma~\ref{L.n+d-sep} (with $n\,{-}\,1$ in place of the
$n$ of that lemma) tells us that
\begin{equation}\begin{minipage}[c]{23pc}\label{d.S=sum}
$S_{(n)}$ is spanned as a $\!k\!$-vector space by
$\![n{-}1,n{-}1{+}d\,]\!$-separating products of $g_1,\dots,g_r$
of lengths~$\geq n.$
\end{minipage}\end{equation}

Let $U_{n,d}$ denote the finite set of all monomials
$u(x_1,\dots,x_r,y)$ of lengths $n,\dots,n{+}d$ in $r{+}1$
indeterminates $x_1,\dots,x_r,y,$ in which $y$ appears exactly once.
The point of this is that if $w$ is any
$\![n{-1},n{+}d{-1}\,]\!$-separating monomial
in $x_1,\dots,x_r,$ as in~(\ref{d.S=sum}), this means
that we can choose a submonomial $w'$ such that
$\length(w)-\length(w')\in [n{-}1,n{+}d{-}1\,];$
hence $w$ can be written $u(x_1,\dots,x_r,w')$ for some $u\in U_{n,d}.$
To each $u=u(x_1,\dots,x_r,y)\in U_{n,d}$ let us
associate the $\!k\!$-linear map $f_u: A\to A$ taking
$a\in A$ to $u(g_1,\dots,g_r,a).$
Then from~(\ref{d.S=sum}) we conclude that
\begin{equation}\begin{minipage}[c]{23pc}\label{d.S=sumf}
$S_{(n)}\ =\ \sum_{u\in U_{n,d}}\,f_u(S).$
\end{minipage}\end{equation}

Now the closure of the right-hand side of~(\ref{d.S=sumf}) is
\begin{equation}\begin{minipage}[c]{23pc}\label{d.sumUnd}
$\sum_{u\in U_{n,d}}f_u(A)$
\end{minipage}\end{equation}
by continuity of the $f_u,$ and Lemmas~\ref{L.lcp<=}(iii),
\ref{L.lcp=>}(iii) and~\ref{L.lcp=>}(v);
and this sum~(\ref{d.sumUnd}) is clearly contained in $A_{(n)}.$
On the other hand, as noted in~(\ref{d.clS_()}), the closure of
the left-hand of~(\ref{d.S=sumf}) contains $A_{(n)}.$
So we have equality:
\begin{equation}\begin{minipage}[c]{23pc}\label{d.A_(n)=clS_(n)}
$A_{(n)}$ is the closure of $S_{(n)}$ in $A;$ in particular,
it is a closed subspace.
\end{minipage}\end{equation}

To show it is open, let $C_n$ be the $\!k\!$-subspace of $S$
spanned by all monomials of length $<n$ in $g_1,\dots,g_r.$
Since $g_1,\dots,g_r$ generate $S,$ we have
\begin{equation}\begin{minipage}[c]{23pc}\label{d.S=C+}
$S\ =\ C_n+S_{(n)}.$
\end{minipage}\end{equation}
Since $C_n$ is finite-dimensional, it is closed
by Lemmas~\ref{L.lcp<=}(i) and~\ref{L.lcp=>}(iii),
so taking the closure of~(\ref{d.S=C+}), we get $A=C_n+A_{(n)}$
by Lemma~\ref{L.lcp=>}(v).
Hence the closed subspace $A_{(n)}$ has finite codimension in $A,$
so by Lemma~\ref{L.lncp/fd}(ii), it is open, giving
statement~(i) of our theorem.

The remaining assertions easily follow.
Indeed, a homomorphism $f$ of $A$
into a nilpotent algebra $B$ has nilpotent image, hence
has some $A_{(n)}$ in its kernel, hence that kernel is open,
so $f$ is continuous.
If $A$ is in fact an inverse limit of nilpotent algebras,
then by \cite[Theorem~10(iii)]{pro-np}, its image under any
homomorphism to a finite-dimensional algebra is nilpotent,
so (ii) yields (iii).
\end{proof}

{\em Remark:}  In the hypothesis of the above theorem, the
condition that the algebras of which
$A$ is an inverse limit be finite-dimensional
is not needed for conclusion~(iii).
For if $A$ is an inverse limit of algebras $A_i,$
then replacing each of these by the image of $A$ therein,
we may assume that $A$ maps surjectively to each of them.
Any $S$ dense in $A$ in the inverse
limit topology will then also map surjectively to each $A_i;$
hence if $S$ is finitely generated, so are the $A_i.$
Now if the $A_i$ are nilpotent, as assumed in~(iii), then
finite generation implies finite-dimensionality.

Let us note the consequence of the above theorem for homomorphisms
among inverse limit algebras.

\begin{corollary}\label{C.lim->lim}
Suppose that $A$ is the inverse limit of a system of $\!k\!$-algebras
$(A_i)_{i\in I}$ in a separative variety $\V,$
that $B$ an inverse limit of an arbitrary system of
$\!k\!$-algebras $(B_j)_{j\in J}$
\textup{(}in each case with connecting morphisms,
which we do not show\textup{);} and that $A$
has a finitely generated dense subalgebra.

Then if either all the $A_i$ are finite-dimensional and all
the $B_j$ nilpotent, or all the $A_i$ are nilpotent and all
the $B_j$ finite-dimensional, then every algebra homomorphism
$A\to B$ is continuous in the inverse limit topologies on $A$ and $B.$
\end{corollary}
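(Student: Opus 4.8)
The plan is to reduce both cases to Theorem~\ref{T.main} applied to $A$, using the universal property of the inverse limit $B=\lm_j B_j$ to check continuity one coordinate at a time. First I would observe that a map $h\colon A\to B$ into an inverse limit is continuous in the inverse limit topology on $B$ if and only if each composite $\pi_j\circ h\colon A\to B_j$ is continuous, where $\pi_j\colon B\to B_j$ is the canonical projection: this is immediate from the fact that the $\pi_j$ are continuous (so the ``only if'' is trivial) and that the open sets $\pi_j^{-1}(W)$, for $W$ open in the discrete $B_j$, form a subbasis for the topology of $B$ (so continuity of each $\pi_j\circ h$ pulls back a subbasis of opens to opens, giving the ``if''). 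Thus it suffices to show each $\pi_j\circ h\colon A\to B_j$ is continuous.

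Next I would handle the two hypotheses separately. In the case where all $A_i$ are finite-dimensional and all $B_j$ are nilpotent: $A$ is then literally an inverse limit of finite-dimensional algebras in the separative variety $\V$ with a finitely generated dense subalgebra, so Theorem~\ref{T.main}(ii) applies directly, and each $\pi_j\circ h\colon A\to B_j$ is a homomorphism into a nilpotent algebra, hence continuous. In the case where all $A_i$ are nilpotent and all $B_j$ are finite-dimensional: here I would invoke the Remark following Theorem~\ref{T.main}, which says that for conclusion~(iii) the finite-dimensionality of the $A_i$ is not needed, only their nilpotence together with the finitely generated dense subalgebra; so Theorem~\ref{T.main}(iii) gives that every homomorphism from $A$ to a finite-dimensional algebra is continuous, and again each $\pi_j\circ h\colon A\to B_j$ qualifies. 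In either case all the $\pi_j\circ h$ are continuous, so by the first paragraph $h$ is continuous, as desired.

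I do not expect a serious obstacle here: the corollary is essentially a bookkeeping consequence of the theorem plus the defining property of inverse limits. The one point that requires a moment's care is the coordinatewise criterion for continuity into an inverse limit — specifically, making sure the subbasis description of the inverse limit topology on $B$ is used correctly, since $B$ carries the initial topology induced by the $\pi_j$. Everything else (that $\pi_j\circ h$ is a $k$-algebra homomorphism, that the relevant target algebras are nilpotent or finite-dimensional as needed) is immediate from the hypotheses, and the two cases map cleanly onto conclusions (ii) and (iii) of Theorem~\ref{T.main} respectively, with the Remark supplying the only slightly non-obvious ingredient in the second case.
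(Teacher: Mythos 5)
Your proposal is correct and follows essentially the same route as the paper's own proof: reduce continuity into $B=\lm_j B_j$ to continuity of each composite $A\to B\to B_j$, then apply Theorem~\ref{T.main}(ii) in the first case and Theorem~\ref{T.main}(iii), as adjusted by the Remark, in the second. No gaps.
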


\begin{proof}
A basis of open subspaces of $B$ is given by the kernels of
its projection maps to the $B_j,$ so it will suffice to show
that the inverse image of each of those kernels under any
homomorphism $f:A\to B$ is open in $A.$
Such an inverse image is the kernel of the composite
homomorphism $A\to B\to B_j.$
If the $A_i$ are finite-dimensional and the $B_j$ nilpotent,
this composite falls under case~(ii) of Theorem~\ref{T.main},
while if the $A_i$ are nilpotent and the $B_j$ finite-dimensional,
it falls under case~(iii) (as adjusted by the above Remark).
In either case, the continuity given by the theorem
means that the kernel of the above composite map is open, as required.
\end{proof}

\section{Separativity of some varieties, familiar and unfamiliar}\label{S.assoc&&}

Clearly, any monomial $w$ of length $>1$ is congruent modulo the
consequences of the
{\em associative} identity to a $\!1\!$-separating monomial $w'\,x_i.$
So the variety of associative $\!k\!$-algebras is
$\![1,1]\!$-separative, and Theorem~\ref{T.main} applies to it.

The variety of Lie algebras is also $\![1,1]\!$-separative,
but the proof is less trivial.
When one works out the details, one sees that the
argument embraces the associative case as well:

\begin{lemma}\label{L.8}
Suppose $\V$ is the variety of associative algebras, the variety of
Lie algebras, or generally, any variety of $\!k\!$-algebras
satisfying identities modulo which each of the monomials
$(xy)z,$ $z(xy)$ is congruent to some linear combination of
the eight monomials having $x$ or $y$ as ``outside'' factor:
\begin{equation}\begin{minipage}[c]{23pc}\label{d.8}
$x(yz),\ \ x(zy),\ \ (yz)x,\ \ (zy)x,
\ \ y(xz),\ \ y(zx),\ \ (xz)y,\ \ (zx)y.$
\end{minipage}\end{equation}

Then $\V$ is $\![1,1]\!$-separative.
\end{lemma}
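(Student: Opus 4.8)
The plan is to argue by induction on the integer $\mu(w):=\min(\length(w'),\length(w''))$, where $w=w'w''$ is the unique factorization of a monomial $w$ of length $>1$ into two submonomials. What I want to show is that every such $w$ is congruent, modulo homogeneous identities of $\V$, to a $k$-linear combination of $1$-separating monomials (recall from Definition~\ref{D.separating} that ``$[1,1]$-separating'' means just ``$1$-separating''); this is exactly the assertion that $\V$ is $[1,1]$-separative. The starting observation is that for $w=w'w''$ of length $N$, every proper submonomial of $w$ has length at most $\max(\length(w'),\length(w''))$, so $w$ is $1$-separating — has a submonomial of length $N-1$ — precisely when $\min(\length(w'),\length(w''))=1$, i.e.\ when one of $w',w''$ is a single indeterminate. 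In particular the base case $\mu(w)=1$ is trivial: $w$ is then itself $1$-separating.

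For the inductive step, assume $\mu(w)\ge 2$. I will treat the case $\length(w')\le\length(w'')$; the opposite case is entirely symmetric, using the hypothesized identity for $z(xy)$ in place of that for $(xy)z$. Put $a=\length(w')=\mu(w)$ and $b=\length(w'')\ge a$. Since $a\ge2$, the factor $w'$ is itself a product $w'=PQ$ with $\length(P)=a_1\ge1$, $\length(Q)=a_2\ge1$ and $a_1+a_2=a$, so that $w=(PQ)\,w''$. The identity assumed for $(xy)z$ is multilinear in $x,y,z$, hence homogeneous, so substituting the monomials $P$, $Q$, $w''$ for $x$, $y$, $z$ in it — a substitution that preserves congruence modulo homogeneous identities of $\V$ — expresses $w$ modulo such identities as a $k$-linear combination of the eight monomials obtained by the same substitution into~(\ref{d.8}).

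Now comes the length count that makes breaking apart the \emph{shorter} factor pay off. Each of those eight monomials has length $a_1+a_2+b=N$, and — this is precisely the feature of the list~(\ref{d.8}) — each has $P$ or $Q$, rather than $w''$, as one of its two top-level factors. Hence the $\mu$-value of each is $\min(a_i,N-a_i)$ for $i=1$ or $i=2$, and since $a_i\le a-1\le b-1<b\le N-a_i$ this is simply $a_i$. Thus every one of the eight monomials has $\mu$-value $a_1$ or $a_2$, each strictly less than $\mu(w)=a$. By the induction hypothesis each of them is congruent modulo homogeneous identities of $\V$ to a $k$-linear combination of $1$-separating monomials; transitivity of congruence together with $k$-linearity then gives the same for $w$, closing the induction.

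I do not expect a genuine obstacle beyond this bookkeeping, since the real content has been absorbed into the hypothesis: it is essential that the length-$3$ rewrites of $(xy)z$ and $z(xy)$ leave $x$ or $y$ — and not $z$ — as the outside factor, because that is what forces the $\mu$-decreasing factor above to be a submonomial of $w'$, of length $a_1$ or $a_2$, rather than one of length $b$ or more. To finish, one verifies the hypothesis in the two named cases: it is immediate for associative algebras, where $(xy)z\equiv x(yz)$ and $z(xy)\equiv(zx)y$, and for Lie algebras it is the Jacobi identity together with anticommutativity, which give $(xy)z\equiv x(yz)-y(xz)$ and correspondingly for $z(xy)$.
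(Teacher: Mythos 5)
Your proof is correct and follows essentially the same route as the paper's: induction on $\min(\length(w'),\length(w''))$, with the base case $\mu=1$ giving $1$-separation outright and the inductive step splitting the shorter factor and applying the hypothesized identity for $(xy)z$ (or, symmetrically, $z(xy)$) so that the outside factor of each resulting monomial is a proper submonomial of $w'$. Your explicit length bookkeeping showing that the $\mu$-value drops to $a_1$ or $a_2$, and your verification of the hypothesis for the associative and Lie cases, match the paper's argument.
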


\begin{proof}
Let $w$ be a monomial of degree $>1$
which we wish to show congruent modulo the identities of $\V$
to a linear combination of $\![1,1]\!$-separating,
i.e., $\!1\!$-separating, monomials.
Let us write $w=w'w'',$ and induct on
$\min(\length(w'),\,\length(w'')).$

If that minimum is $1,$ then $w$ is itself $\!1\!$-separating.
In the contrary case, assume without loss of generality
(by the left-right symmetry of our hypothesis and conclusion) that
$1<\length(w')\leq\length(w''),$
and let us write the shorter of these factors,
$w',$ as $w_1 w_2,$ and rename $w''$ as $w_3,$
so that $w=(w_1 w_2)w_3.$
Putting $w_1,$ $w_2$ and $w_3$ for $x,$ $y$ and $z$
in the identity involving $(xy)z$ in the hypothesis of the lemma, we
see that $w=(w_1 w_2)w_3$ is congruent modulo
the identities of $\V$ to a $\!k\!$-linear combination of
products of $w_1,\ w_2,\ w_3$ in each of which $w_1$ or $w_2$
is the ``outside'' factor.
Since $w_1$ and $w_2$ each have length $<\length(w'),$
our inductive hypothesis is applicable to the resulting products,
so we may reduce them
to linear combinations of $\!1\!$-separating monomials,
completing the proof of the general statement of the lemma.

For $\V$ the variety of associative
algebras, the associative identity clearly yields the
stated hypothesis, while if $\V$ is the variety of Lie
algebras, two versions of the Jacobi identity, one expanding
$[[x,y],z]$ and the other $[z,[x,y]],$ together do the same.
\end{proof}

The case of Jordan algebras is more complicated; but when one
works it out, one sees a pattern of which the preceding
lemma is the $d=0$ case, while Jordan algebras come under $d=1.$
So we may consider the preceding lemma and its proof as a warm-up for
the next result, giving the general case.

We note an easy fact that we will need in the proof.
Let $w$ be a monomial of length $n.$
If $n>1,$ we can write it as a product of two submonomials;
if $n>2$ we may write one of those two as such a product, and hence
get $w$ as a bracketed product of three submonomials; and so on.
We conclude by induction that
\begin{equation}\begin{minipage}[c]{23pc}\label{d.m<n}
For every positive $m\leq\length(w),$ we can write $w$
as a bracketed product of exactly $m$ submonomials.
\end{minipage}\end{equation}

We can now prove

\begin{proposition}\label{L.general_d}
Let $\V$ be a variety of $\!k\!$-algebras, and $d$ a natural number.
Then a sufficient condition for $\V$
to be $\![1,1{+}d\,]\!$-separative is
that, for every monomial $u$ obtained by bracketing the ordered string
$x_1\dots x_{d+2}$ of $d+2$ indeterminates, each of the monomials
$uz,$ $zu$ in $d+3$ indeterminates $x_1,\,\dots,\,x_{d+2},\,z$
be congruent modulo the homogeneous identities of $\V$
to a linear combination of monomials of the form $u'u'',$
in which {\em one} of $u',$ $u''$ is a product \textup{(}in
some order, with some bracketing\textup{)}
of a {\em proper} nonempty subset of $x_1,\dots,x_{d+2}.$
\textup{(}Thus, the other factor will be a product of $z$ and those
of the $x_m$ not occurring in the abovementioned product.\textup{)}

In particular, the preceding lemma is the case $d=0$ of this result.

The variety of Jordan algebras over a field
of characteristic not $2$ satisfies this criterion with $d=1.$
\end{proposition}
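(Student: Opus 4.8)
The plan is to prove the three assertions in turn: the general sufficient condition, the remark that Lemma~\ref{L.8} is its $d=0$ instance, and the statement about Jordan algebras.

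\emph{The general criterion.} For a monomial $w$ of length $>1$, let $\mu(w)$ denote the length of the shorter of the two factors in its (unique) top-level factorization $w=w'w''$; thus $\mu(w)\geq 1$. I would prove by induction on $\mu(w)$ that $w$ is congruent, modulo homogeneous identities of $\V$, to a $\!k\!$-linear combination of $\![1,1{+}d\,]\!$-separating monomials. If $\mu(w)\leq d+1$, then the longer of $w',\,w''$ is a submonomial of $w$ of length $\length(w)-\mu(w)$, so $w$ is itself $\!\mu(w)\!$-separating, hence $\![1,1{+}d\,]\!$-separating, and nothing is to be done. (This already disposes of every $w$ with $\length(w)\leq 2d+3$.) If $\mu(w)\geq d+2$, write $w=w'w''$ with $w'$ the shorter factor, so $\length(w')=\mu(w)\geq d+2$ (the case where $w''$ is shorter is symmetric, since the hypothesis treats $uz$ and $zu$ alike). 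By~\eqref{d.m<n} applied to $w'$, we may write $w'=u(v_1,\dots,v_{d+2})$ as a bracketed product of $d+2$ submonomials, for a suitable bracketing $u$ of $d+2$ indeterminates; then $w=u(v_1,\dots,v_{d+2})\,w''$ is the monomial $uz$ with $v_1,\dots,v_{d+2},w''$ substituted for $x_1,\dots,x_{d+2},z$. Applying the hypothesis to this $u$ and substituting, $w$ becomes congruent, modulo homogeneous identities of $\V$, to a linear combination of monomials $u'u''$ in each of which one factor is a product of a proper nonempty subset of $\{v_1,\dots,v_{d+2}\}$; such a factor has length at most $\length(w')-1=\mu(w)-1$, while the complementary factor has length at least $\length(w'')+1\geq\mu(w)+1$, so it is the shorter of the two top-level factors of $u'u''$, whence $\mu(u'u'')\leq\mu(w)-1$. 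By the induction hypothesis each such $u'u''$ is congruent to a linear combination of $\![1,1{+}d\,]\!$-separating monomials; hence so is $w$. Thus $\V$ is $\![1,1{+}d\,]\!$-separative.

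\emph{The case $d=0$.} Here the string $x_1\dots x_{d+2}$ is just $x_1x_2$, its only bracketing is $x_1x_2$, and the proper nonempty subsets of $\{x_1,x_2\}$ are $\{x_1\}$ and $\{x_2\}$. So the criterion asserts that $(x_1x_2)z$ and $z(x_1x_2)$ are each congruent, modulo homogeneous identities of $\V$, to a linear combination of the monomials obtained by taking one of $x_1,\,x_2$ as an ``outside'' factor and multiplying it, in either order, by a product of $z$ with the other of $x_1,\,x_2$. Renaming $x_1,x_2,z$ as $x,y,z$, these are precisely the eight monomials in~\eqref{d.8}; so the criterion with $d=0$ is word-for-word the hypothesis of Lemma~\ref{L.8}, while ``$\![1,1{+}0\,]\!$-separative'' means ``$\![1,1]\!$-separative.'' Thus Lemma~\ref{L.8} is the case $d=0$.

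\emph{Jordan algebras.} Let $\V$ be the variety of Jordan algebras over a field of characteristic $\neq 2$, and take $d=1$. The string $x_1x_2x_3$ has the two bracketings $(x_1x_2)x_3$ and $x_1(x_2x_3)$; since commutativity ($xy=yx$, a homogeneous identity of $\V$) gives $zu\equiv uz$ and $x_1(x_2x_3)\equiv(x_2x_3)x_1$, all four monomials $uz,\ zu$ that have to be treated are, modulo commutativity and relabelling of $x_1,x_2,x_3$, of the single form $((x_1x_2)x_3)z$. Linearising the Jordan identity $x^2(xy)=x(x^2y)$ --- substitute $x\mapsto x_1+x_2+x_3$, extract the component multilinear in $x_1,x_2,x_3$, and divide by $2$ --- gives the homogeneous identity
\[(x_1x_2)(x_3z)+(x_1x_3)(x_2z)+(x_2x_3)(x_1z)\ =\ x_3((x_1x_2)z)+x_2((x_1x_3)z)+x_1((x_2x_3)z),\]
valid for all $x_1,x_2,x_3,z$. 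Interchanging the names $x_3$ and $z$ turns this into
\[(x_1x_2)(zx_3)+(x_1z)(x_2x_3)+(x_2z)(x_1x_3)\ =\ z((x_1x_2)x_3)+x_2((x_1z)x_3)+x_1((x_2z)x_3),\]
so, solving for $z((x_1x_2)x_3)$ and identifying it with $((x_1x_2)x_3)z$ via commutativity, $((x_1x_2)x_3)z$ is a linear combination of the five monomials $(x_1x_2)(zx_3)$, $(x_1z)(x_2x_3)$, $(x_2z)(x_1x_3)$, $x_2((x_1z)x_3)$, $x_1((x_2z)x_3)$. Each of these has a factor --- respectively $x_1x_2$, $x_2x_3$, $x_1x_3$, $x_2$, $x_1$ --- that is a product (in some order, with some bracketing) of a proper nonempty subset of $\{x_1,x_2,x_3\}$; so $((x_1x_2)x_3)z$ has the required form, and $\V$ satisfies the criterion with $d=1$.

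\emph{Expected main obstacle.} The length bookkeeping in the general criterion --- checking that the ``subset factor'' produced by the hypothesis is really the shorter top-level factor of each new monomial, so that $\mu$ strictly decreases --- is routine but needs care. The genuinely non-obvious point is in the Jordan case: one must notice that $z$ should be placed in the role of one of the \emph{three} coincident arguments of the Jordan identity rather than in that of its fourth argument $y$, for it is this substitution that makes $((x_1x_2)x_3)z$ appear while leaving every other monomial with a factor supported on a proper subset of $\{x_1,x_2,x_3\}$.
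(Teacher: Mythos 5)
Your proof is correct and follows essentially the same route as the paper's: the same induction on the length of the shorter top-level factor of $w,$ using~(\ref{d.m<n}) and the hypothesized identities for $uz$ and $zu,$ and the same multilinearization of the Jordan identity with $z$ placed in the role of one of the three coincident arguments (the paper substitutes $x=x_2+x_3+z,$ $y=x_1$ directly, where you substitute $x=x_1+x_2+x_3$ and then interchange $x_3$ with $z$ --- the resulting identities agree up to relabeling and commutativity). Your explicit check that the induction parameter $\mu$ strictly decreases is a detail the paper's sketch leaves to the reader.
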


\begin{proof}[Sketch of proof]
Following the pattern of the proof of the preceding lemma,
assume $w$ is a monomial of degree $>1$ that we want to express as a
linear combination of $\![1,1{+}d\,]\!$-separating monomials.
If it is not already $\![1,1{+}d\,]\!$-separating, we write $w=w'w'',$
note that $\min(\length(w'),\,\length(w''))\geq d+2,$
and induct on that minimum.
Assuming without loss of generality that $w'$ has that minimum
length, we use~(\ref{d.m<n}) to write $w'=u(w_1,\dots,w_{d+2}),$
for submonomials $w_1,\dots,w_{d+2}$ of $w'.$
We now apply to $w=w'w''=u(w_1,\dots,w_{d+2})\,w''$ an
identity of the sort described in our hypothesis for $uz,$
putting $w_1,\dots,w_{d+2}$ for $x_1,\dots,x_{d+2},$
and $w''$ for $z,$
and note that our inductive hypothesis applies to each monomial in
the resulting expression, completing the proof of our main statement.

It is easy to see that for $d=0,$ this result is equivalent
to that of the preceding lemma.

To see that the variety of Jordan algebras has the indicated
property with $d=1,$ first note that modulo
relabeling of $x_1,\,x_2,\,x_3,$ and consequences of the
commutative identity (satisfied by Jordan algebras),
all the monomials $uz$ and $zu$ for which
we must verify that property are congruent to $z(x_1(x_2\,x_3)).$
Hence we need only verify it for that monomial.

To do so, we take the Jordan identity
\begin{equation}\begin{minipage}[c]{23pc}\label{d.Jordan}
$(xy)(xx)=x(y(xx)),$
\end{minipage}\end{equation}
make the substitutions $x=x_2+x_3+z$ and $y=x_1,$ and take the part
multilinear in these four indeterminates.
Up to commutativity, this has only one term with
$z$ ``on the outside'', namely $z(x_1(x_2 x_3)),$ which occurs on
the right-hand side.
It occurs twice, but since we are
assuming $\mathrm{char}(k)\neq 2,$ we can
divide out by $2$ (which is in fact the multiplicity of every term
occurring, due to the presence of a single $(xx)$ on each side).
The resulting identity expresses $z(x_1(x_2 x_3))$ in the desired form.
\end{proof}

(In the case we have excluded from the final statement of the
proposition, where $\mathrm{char}(k)=2,$
Jordan algebras are usually defined to involve operations
quadratic in one of the variables, rather than just the bilinear
multiplication; hence they fall outside the scope of this note.
If one wants a concept of Jordan algebra over such a $k$
involving only the bilinear multiplication,
it would be natural to include among the identities
of that operation the one
gotten by taking the identity in $x_1,\,x_2,\,x_3,\,z$ obtained
by multilinearization above,
written with integer coefficients, dividing all these
coefficients by $2,$ and then reducing modulo~$2.$
If such a definition is used, our argument for Jordan
algebras is valid without restriction on the characteristic.)

One may ask whether the condition of the above proposition is
necessary as well as sufficient for  the stated conclusion.
It is not.
Using the same general approach as in the above two proofs
(but noting that in proving the final statement below,
one does not have recourse to left-right symmetry),
the reader should find it easy to verify

\begin{lemma}\label{L.()()}
The variety $\V$ of $\!k\!$-algebras defined by the identity
\begin{equation}\begin{minipage}[c]{23pc}\label{d.()()}
$(x_1 x_2)\,(x_3 x_4)\ =\ 0$
\end{minipage}\end{equation}
is $\![1,1]\!$-separative.

More generally, this is true of any variety $\V$ such that modulo
homogeneous identities of $\V,$ the monomial $(x_1 x_2)\,(x_3 x_4)$
is congruent to a linear combination of monomials of the
forms $u x_3$ and $u x_4.$\qed
\end{lemma}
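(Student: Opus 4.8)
The plan is to imitate the inductive arguments in the proofs of Lemma~\ref{L.8} and Proposition~\ref{L.general_d}, but with one change forced by the fact that the present hypothesis on $(x_1x_2)(x_3x_4)$ is not left-right symmetric: instead of inducting on $\min(\length(w'),\length(w''))$, I would induct on $\length(w'')$, where $w=w'w''$ is the unique factorization of the monomial $w$ being reduced as a product of two monomials.

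So let $w$ be a monomial of degree $>1$, write $w=w'w''$, and argue by induction on $\length(w'')$. If $\length(w'')=1$ or $\length(w')=1$, then one of $w',w''$ is a submonomial of $w$ of length $\length(w)-1$, so $w$ is $1$-separating and there is nothing to prove. Otherwise $\length(w'),\length(w'')\geq 2$, so we may write $w'=w_1w_2$ and $w''=w_3w_4$ with $w_1,\dots,w_4$ nonempty submonomials, giving $w=(w_1w_2)(w_3w_4)$. Substituting $w_1,w_2,w_3,w_4$ for $x_1,x_2,x_3,x_4$ in the hypothesized congruence, and using homogeneity (each monomial $u\,x_3$, $u\,x_4$ appearing there has $u$ a monomial in the remaining three indeterminates, each used once), we find $w$ congruent, modulo homogeneous identities of $\V$, to a $k$-linear combination of monomials each of the form $p\,w_3$ or $q\,w_4$ for suitable monomials $p,q$ in the $w_i$. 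Each of these has length $\length(w)$ and has $w_3$, respectively $w_4$, as its right-hand factor, with $\length(w_3),\length(w_4)<\length(w_3)+\length(w_4)=\length(w'')$; so the inductive hypothesis rewrites each of them as a $k$-linear combination of $1$-separating monomials, and hence does the same for $w$. This gives the more general assertion; the first assertion is the special case in which the congruence for $(x_1x_2)(x_3x_4)$ is simply $\equiv 0$ (then the linear combination above is empty, so every monomial of degree $>1$ is either $1$-separating or congruent to $0$).

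I do not expect a genuine difficulty here; the one point that needs care --- and the reason the symmetry shortcut of the earlier proofs is unavailable --- is the choice of induction parameter. Because the hypothesized identity only lets us rewrite the right-hand factor, inducting on $\length(w)$ or on $\min(\length(w'),\length(w''))$ need not yield a decrease, whereas after applying the identity the new right-hand factors $w_3,w_4$ are proper submonomials of the old one, $w''=w_3w_4$, so induction on $\length(w'')$ terminates. Granting this, $\V$ is separative in the sense of Definition~\ref{D.sep}, and Theorem~\ref{T.main} applies to it.
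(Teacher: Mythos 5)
Your proof is correct and is exactly the argument the paper has in mind: the lemma is left to the reader with the hint that one follows the inductive pattern of Lemma~\ref{L.8} and Proposition~\ref{L.general_d} but ``does not have recourse to left-right symmetry,'' and your choice of $\length(w'')$ as the induction parameter (which strictly decreases to $\length(w_3)$ or $\length(w_4)$ after each application of the identity, where $\min$ or total length need not) is precisely the adjustment that remark is pointing at.
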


\section{Counterexamples}\label{S.cegs}

Let us now construct a $\!k\!$-algebra $A$ which is an inverse limit of
finite-dimensional $\!k\!$-algebras, and has a finitely generated
subalgebra $S$ dense in the pro-discrete topology, but which does
not lie in a separative variety, and for which
the conclusions of Theorem~\ref{T.main} fail.

We need to arrange that for some $n,$ $S_{(n)}$ is {\em not}
the sum of the images of finitely many linear polynomial
operations on $S.$
Hence we need to build up, out of a finite generating
set for $S,$ an infinite family of monomials whose sums of
products will create this ``problem'' in $S_{(n)}.$
The smallest number of generators that might work is $1,$
and the smallest $n$ is $2.$
It turns out that we can attain these values.

Starting with a single generator $p,$ let
$p^2=q_1,$ and recursively define $p\,q_m=q_{m+1}.$
Our $S$ will be spanned by $p,$ these elements $q_m,$ and the
products $q_m q_n = r_{mn}.$
Letting all products other than these be $0,$
we can describe $S$ abstractly as having a $\!k\!$-basis of elements
\begin{equation}\begin{minipage}[c]{23pc}\label{d.pqr}
$p,\quad q_m,\quad r_{mn,}\quad (m,n\geq 1),$
\end{minipage}\end{equation}
and multiplication
\begin{equation}\begin{minipage}[c]{23pc}\label{d.pqr_mult}
$p\,p\,=\,q_1,\quad p\,q_m\,=\,q_{m+1},\quad q_m\,q_n\,=\,r_{mn},$\quad
with all other products of basis elements $0.$
\end{minipage}\end{equation}

For every $i>0,$ $S$ has a finite-dimensional nilpotent
homomorphic image $S_i$ defined by setting to zero all
$q_m$ with $m\geq i$ and all $r_{mn}$ with $\max(m,n)\geq i.$
Let $A$ be the inverse limit of the system
$\dots\to S_{i+1}\to S_i\to\dots\to S_1.$
This consists of all formal infinite linear combinations
of the elements~(\ref{d.pqr}), with multiplication still formally
determined by~(\ref{d.pqr_mult}).

Now if we multiply two elements $a,b\in A,$ the array of coefficients
of the various $r_{mn}$ in the product, arranged in an infinite matrix,
will clearly be given by the product of the column formed
by the coefficients of the $\!q$'s in the element $a,$ and the
row formed by the coefficients of
the $\!q$'s in the element $b.$
Hence it will have rank~$\leq 1,$ where we define the
rank of an infinite matrix as the supremum of the ranks of
its finite submatrices.
In a linear combination of $d$ such products, the corresponding matrix
of coefficients may have rank as large as $d,$ but
we see that in no element of $A_{(2)}$ will it have infinite rank.

The set of $a\in A$ such that the matrix of coefficients
in $a$ of the $r_{mn}$ has finite rank forms a proper
$\!k\!$-subspace of $A;$ e.g., it does not contain the ``diagonal''
element $\sum_m r_{mm}.$
Thus (given the Axiom of Choice) there is a nonzero
linear functional $\varphi:A\to k$ annihilating that subspace.
Let $k\,\varepsilon$ denote a $\!1\!$-dimensional
$\!k\!$-algebra with zero multiplication (i.e.,
let $\varepsilon^2=0),$ and define a $\!k\!$-linear map
$f:A\to k\,\varepsilon$ by $f(a)=\varphi(a)\varepsilon.$
From the fact that $\varphi(A_{(2)})=\{0\}$ and the
relation $\varepsilon^2=0,$ we see that $f$ is an algebra homomorphism.
Since $\ker f$ contains all
finite linear combinations of the $p,\ q_m$ and $r_{mn},$
it has all of $A$ as closure.
But it is not all of $A,$ so $f$ is not continuous.

By Theorem~\ref{T.main}, this $A$ cannot lie
in a separative variety of $\!k\!$-algebras.
However, it does lie in the variety determined
by the rather strong identities
\begin{equation}\begin{minipage}[c]{23pc}\label{d.pqr_ids}
$((x_1 x_2)\,x_3)\,x_4\,=\,0,\quad x_4((x_1 x_2)\,x_3)\,=\,0.$
\end{minipage}\end{equation}
Indeed, substituting any elements of $A$ for $x_1$
and $x_2,$ we find that $x_1 x_2$ yields a
formal $\!k\!$-linear combination of $\!q$'s and $\!r$'s only.
Multiplying this on the right by an arbitrary element gives a
formal linear combination of $r$'s only; and this annihilates
everything on both the right and the left.
(Contrast Lemma~\ref{L.()()}.)

We summarize this construction as

\begin{example}\label{E.pqr}
Let $A$ be the linearly compact $\!k\!$-algebra of all formal
infinite linear combinations of basis elements\textup{~(\ref{d.pqr})},
with multiplication determined by\textup{~(\ref{d.pqr_mult})}, and
$S$ the subalgebra of $A$ generated by $p.$
Then $S$ is dense in $A,$ and $A$ is an inverse limit of
finite-dimensional nilpotent homomorphic images $S_i$ of $S,$ and
satisfies the identities\textup{~(\ref{d.pqr_ids})};
but $A$ admits a discontinuous homomorphism $f$ to the
$\!1\!$-dimensional square-zero $\!k\!$-algebra $k\,\varepsilon.$\qed
\end{example}

We can get an example $A^\mathrm{comm}$ with similar properties,
but with commutative multiplication, if we modify the description
of the above algebra $A$ by supplementing each relation $p\,q_m=q_{m+1}$
with the relation $q_mp=q_{m+1},$ and taking $r_{mn}$ and $r_{nm}$
to be alternative symbols for the same basis element,
for all $m$ and $n.$
(If $\mathrm{char}(k)\neq 2,$ $A^\mathrm{comm}$ can be obtained from
the algebra $A$ of the above example by using the
symmetrized multiplication $x*y=xy+yx,$ and passing to
the closed subalgebra of $A$ generated by $p$ under that operation.)
In this algebra, the matrix gotten by starting with the
matrix of coefficients of the
$r_{mn}$ (now a symmetric matrix), and doubling the
entries on the main diagonal, will have rank $\leq 2$
for any product $ab,$ so on every element of $(A^\mathrm{comm})_{(2)},$
its rank will again be finite.
We deduce as before that this algebra
admits a discontinuous homomorphism to $k\,\varepsilon;$
we also note that it satisfies the identities
\begin{equation}\begin{minipage}[c]{23pc}\label{d.pqr_ids_cm}
$x_1 x_2\,=\,x_2 x_1,\quad
((x_1 x_2)(x_3 x_4))\,x_5\,=\,0.$
\end{minipage}\end{equation}

We can likewise get a version $A^\mathrm{alt}$ of our construction that
satisfies the alternating identity $x^2=0,$ again by an easy
modification of the algebra of Example~\ref{E.pqr},
or (this time without any
restriction on the characteristic), by taking an appropriate closed
subalgebra of that algebra under the operation $x*y=xy-yx.$
In this case, we can't have a relation $p^2=q_1;$ rather,
$S^\mathrm{alt}$ is generated by the two elements $p$ and $q_1.$
We find that $A^\mathrm{alt}$ satisfies the identities
\begin{equation}\begin{minipage}[c]{23pc}\label{d.pqr_ids_anti}
$x_1^2=0,\quad ((x_1 x_2)(x_3 x_4))\,x_5\,=\,0.$
\end{minipage}\end{equation}

We end this section with an easier sort of example, showing the
need for the assumption in Theorem~\ref{T.main} that $A$ have
a finitely generated dense subalgebra.
Let $k\,\varepsilon$ again denote the $\!1\!$-dimensional
zero-multiplication algebra.

\begin{example}\label{E.k^I}
For any infinite set $I,$ the zero-multiplication algebra
$(k\,\varepsilon)^I$
\textup{(}which is the inverse limit of the finite-dimensional
zero-multiplication algebras $(k\,\varepsilon)^{I_0}$
as $I_0$ runs over the finite subsets of $I,$ and
is trivially associative, Lie,
etc.\textup{)}\ admits discontinuous homomorphisms to
the $\!1\!$-dimensional zero-multiplication $\!k\!$-algebra
$k\,\varepsilon.$
\end{example}

\begin{proof}
Clearly any linear map between zero-multiplication algebras
is an algebra homomorphism; and there exist discontinuous linear maps
$(k\,\varepsilon)^I\to k\,\varepsilon.$
E.g., since there is no continuous linear extension of
the partial homomorphism taking every element of finite support
to the sum of its nonzero components, any
linear extension of that map will be discontinuous.
\end{proof}

In contrast, if we consider algebra homomorphisms $f$ such
that the image algebra $f(A)$ has nonzero multiplication, then
there are strong restrictions on examples in which, as above,
the domain of $f$ is a direct product algebra.
Namely, it is shown in \cite[Theorem~19]{prod_Lie1}
\cite[Theorem~9(iii)]{prod_Lie2} that if $k$ is an
infinite field, and $f$ a surjective homomorphism from a direct
product $A=\prod_I A_i$ of $\!k\!$-algebras to a finite-dimensional
$\!k\!$-algebra $B,$ and if $\mathrm{card}(I)$ is less than all
uncountable measurable cardinals (a condition that
is vacuous if no such cardinals exist), then
writing $Z(B)=\{b\in B\mid bB=Bb=\{0\}\,\},$ the
composite homomorphism $A\to B\to B/Z(B)$ is always continuous in the
product topology on $A$ (though the given homomorphism
$f:A\to B$ may not be).

\section{Some questions}\label{S.qs}

The result quoted above suggests

\begin{question}\label{Q.B/Z}
If $k$ is an infinite field, and $A$ an inverse limit of
of $\!k\!$-algebras $(A_i)_{i\in I}$
such that the indexing partially ordered set $I$
has cardinality less than any uncountable measurable cardinal,
can one obtain results like Theorem~\ref{T.main}\textup{(ii)}
and~\textup{(iii)} for the composite map
$A\to B\to B/Z(B)$ if $f:A\to B$ is surjective, without the
requirement that $A$ have a finitely generated dense subalgebra $S,$
and/or without the hypothesis that it lie in separative variety?
\end{question}

A different (if less interesting) way to achieve continuity, if $A$
does not have a dense finitely generated subalgebra,
might be to refine the topology in which we try to prove our maps
continuous.
The topology on $A$ defined in the next question is such that a linear
map is continuous under it if and only if its restrictions
to all ``topologically
finitely generated'' subalgebras $A'\subseteq A$ are continuous in
the restriction of the inverse limit topology.
The question asks whether this topology is a reasonable one.

\begin{question}\label{Q.weaker}
Suppose $A$ is an inverse limit of finite-dimensional algebras,
and we define a new linear topology on $A$ by taking for the
open subspaces those
subspaces $U$ whose intersections with the closures $A'$ of all
finitely generated subalgebras $S'$ of $A$ are relatively open
in $A'$ under the pro-discrete topology on $A.$

Will the multiplication of $A$ be bicontinuous in this topology?
\end{question}

For $A=(k\,\varepsilon)^I$ as in Example~\ref{E.k^I}, the topology
described above is the discrete topology on~$A.$

Recall next that in each of the results of \S\ref{S.assoc&&},
separativity was obtained from some finite family of identities.
We may ask whether this is a general phenomenon.

\begin{question}\label{Q.ids_fr_fin_many}
Suppose $\V$ is a separative variety of $\!k\!$-algebras.
Will some overvariety $\V'$ determined by finitely many
identities still be separative?

If this is so, will there be such a $\V'$
which is $\![1,1{+}d\,]\!$-separative
for the least $d$ for which $\V$ has that property?
\end{question}

In Example~\ref{E.pqr}, the fact that $A_{(2)}$ was not closed
in $A$ was related to the fact that
it consisted of sums of arbitrarily large numbers of elements of
\begin{equation}\begin{minipage}[c]{23pc}\label{d.ab}
$\{a\,b\mid a,\,b\in A\}.$
\end{minipage}\end{equation}
One may ask whether the set~(\ref{d.ab}) (not itself a vector
subspace) is nevertheless
closed in our topology on $A$ (though a positive
answer would not lead to any obvious improvement of our results).
More generally,

\begin{question}\label{Q.ab}
If $A,$ $B$ and $C$ are linearly compact vector
spaces, and $f:A\times B\to C$ is a bicontinuous
bilinear map, must $\{f(ab)\mid a\in A,\ b\in B\}$ be closed in $C$?
\end{question}

(Examination of the algebra of Example~\ref{E.pqr}
shows that for that map, the answer is yes.
What this says is that one can test whether an element of $A$ has the
form $a\,b$ by looking at its coordinates finitely many at a time.)

We saw in Lemma~\ref{L.lcp=>}(iv)
that every linearly compact vector space
is an inverse limit of finite-dimensional discrete vector spaces.
Is every linearly compact algebra (i.e., every linearly compact vector
space made an algebra using a bicontinuous multiplication) an inverse
limit of finite-dimensional algebras?
For associative algebras -- yes; in general -- no!
Indeed, it is not true for Lie algebras \cite[Example~25.49]{coalg}.
So we ask

\begin{question}\label{Q.just_lin_cp}
Does either statement of Corollary~\ref{C.lim->lim} remain
true if the assumption that $A,$ respectively $B,$ is an
inverse limit of finite-dimensional algebras is weakened
to say merely that it is a linearly compact algebra?
\end{question}

Recall also that Corollary~\ref{C.lim->lim} has the peculiar
hypothesis that either the $A_i$ are finite-dimensional and
the $B_j$ nilpotent, or the $A_i$ are nilpotent and
the $B_j$ finite-dimensional.
Of the two other possible ways of distributing ``finite-dimensional''
and ``nilpotent'' among the $A_i$ and the $B_j,$ the arrangement
that puts both conditions on the $A_j$ and
no such condition on the $B_i$ certainly does not imply continuity;
for one can take a nondiscrete $A$ arising in this way, and let
$B$ be the same algebra with the discrete topology, regarded as an
inverse limit in a trivial way.
But I do not know about the reverse arrangement.

\begin{question}\label{Q.Bj_fd_np}
If in the last sentence of Corollary~\ref{C.lim->lim} we
instead assume that the $B_j$ are finite-dimensional and
nilpotent \textup{(}with no such condition on the $A_i),$ can we still
conclude that every algebra homomorphism $A\to B$ is continuous?
\end{question}

Everything we have done so far has depended on pro-nilpotence;
but we may ask

\begin{question}\label{Q.non_np}
Is the analog of Theorem~\ref{T.main}\textup{(iii)} true
with nilpotence either replaced by other conditions
\textup{(}e.g., solvability, some version of semisimplicity,
etc.\textup{)}, or dropped altogether?
\end{question}

The generalization of Serre's result on pro-$\!p\!$ groups analogous
to the result asked for above, i.e., with ``pro-$\!p\!$'' generalized
to ``profinite,'' has, in fact,
been proved \cite{NN+DS.I}, \cite{NN+DS.II}.
The proof of this deep result
uses the Classification Theorem for finite simple groups.

In connection with Question~\ref{Q.non_np},
let us recall briefly the meaning of solvability for
a general $\!k\!$-algebra $A;$
it is the straightforward extension of the condition
of that name arising in the theory of Lie algebras
\cite[p.17]{Schafer}:
One defines subspaces $A^{(n)}$ $(n=0,1,\dots)$ of $A$ recursively by
\begin{equation}\begin{minipage}[c]{23pc}\label{d.der_series}
$A^{(0)}=A,\qquad A^{(n+1)}= A^{(n)}\,A^{(n)},$
\end{minipage}\end{equation}
and calls $A$ solvable if $A^{(n)}=\{0\}$ for some $n.$

A difference between nilpotence and solvability which may be
relevant to the above question is that a
finitely generated solvable algebra, unlike a
finitely generated nilpotent algebra, can be infinite-dimensional.
(E.g., the $S$ of Example~\ref{E.pqr} is solvable.
There exist similar examples among Lie algebras.)
So it might be necessary to make finite codimensionality of
the $S^{(n)}$ in $S$ an additional hypothesis in a version of
that theorem for solvable algebras, if such a result is true.

\section{Questions on subalgebras of finite codimension}\label{S.salg_q}

The referee has raised the following interesting question:

\begin{question}\label{Q.salg}
In cases where we have proved or conjectured
that all ideals of finite codimension in an algebra $A$ must be open
\textup{(}Theorem~\ref{T.main}\textup{(iii)} and
Question~\ref{Q.non_np}\textup{)},
can one show more generally that all {\em subalgebras}
of finite codimension in $A$ are open?
\end{question}

While the condition that an ideal $I$ be open means that the
homomorphism $A\to A/I$ is continuous with respect to the discrete
topology on $A/I,$ the condition that a subalgebra
be open has no similar interpretation.
However, since a basis of open
subspaces of $A=\lm A_i$ is given by the kernels of the projection
maps $A\to A_i,$ a subalgebra will be open if and only if it
contains one of these open ideals; equivalently, if and only if
it is the inverse image in $A$ of a subalgebra of one of the~$A_i.$

Mekei \cite{AM} shows that in an {\em associative} algebra $A,$
any subalgebra of finite codimension $n$ contains an
ideal of finite codimension $\leq n(n^2+2n+2)$ in $A.$
Hence for associative $A,$ we can indeed get results
of the sort asked for: Theorem~\ref{T.main}(iii) and Mekei's result
together imply that in a topologically finitely generated
inverse limit of finite-dimensional nilpotent associative
algebras, every subalgebra of finite codimension is open.
Positive results on Question~\ref{Q.non_np} would likewise
yield further results of this sort in the associative case.

Riley and Tasi\'{c} \cite[Lemma 2.1]{DR+VT} prove for $\!p\!$-Lie
(a.k.a.\ restricted Lie) algebras (which lie outside the scope of
this note) a result analogous to Mekei's (though with an
exponential bound in place of $n(n^2+2n+2)).$
But as they note in \cite[Example 2.2]{DR+VT}, the
corresponding statement fails for ordinary Lie algebras over a field
$k$ of characteristic~$\!0:$
The Lie algebra of derivations of
$k[x]$ spanned by the operators $x^n\,d/dx$ $(n\geq 0)$ has
a subalgebra $B$ of codimension $1,$ spanned by those operators
with $n>0;$ but $A$ is simple, so $B$ cannot contain an
ideal of $A$ of finite codimension.

This example can be completed to a linearly compact one:
In the Lie algebra $A$ of derivations on the formal power series
algebra $k[[x]]$ given by the operators $p(x)\,d/dx$ $(p(x)\in k[[x]]),$
the operators such that $p(x)$ has constant term
$0$ again form a subalgebra $B$ of codimension~$1;$
but again, $A$ is simple.
An example not limited to characteristic~$0$
can be obtained from \cite[Example~25.49]{coalg}.
There, we don't get simplicity, but still have too few ideals
for there to be one of finite codimension in a certain $B.$

These examples show that we do not have a result like Mekei's
in the variety of Lie algebras over $k,$ and hence cannot obtain
positive answers to cases of Question~\ref{Q.salg} for that
variety in the way we did for associative algebras.
But this does not say that such results don't hold.
Indeed, the above examples used Lie algebras
with a paucity of open ideals of
finite codimension, while a pro-finite-dimensional algebra necessarily
has a neighborhood basis of the identity consisting of such ideals.
So the answer
to Question~\ref{Q.salg}, even for Lie algebras, remains elusive.

It would also be of interest to know for what varieties
an analog of Mekei's result does hold:

\begin{question}\label{Q.Mekei}
For what varieties $\V$ of $\!k\!$-algebras is it true that
any subalgebra $B$ of finite codimension in an
algebra $A\in\V$ contains an ideal of $A$ of finite codimension?
\end{question}



\begin{thebibliography}{00}

\bibitem{245} George M. Bergman,
{\em An Invitation to General Algebra and Universal Constructions,}
pub. Henry Helson, Berkeley, CA, 1998. 
Readable at \url{http://math.berkeley.edu/~gbergman/245}.
MR~{\bf 99h}:18001.

\bibitem{pro-np} \bysame,
{\em Homomorphic images of pro-nilpotent algebras},
to appear, Illinois J. Math..
Preprint, 19 pp., readable at
\url{http://math.berkeley.edu/~gbergman/papers}.

\bibitem{coalg} {\bysame} and Adam O. Hausknecht,
{\em Cogroups and Co-rings in Categories of Associative Rings,}
A. M. S. Mathematical Surveys and Monographs, v.45, 1996.
~MR~{\bf 97k}:16001.

\bibitem{prod_Lie1} {\bysame} and Nazih Nahlus,
{\em Homomorphisms on infinite direct product algebras,
especially Lie algebras}, preprint, 33\,pp., Nov.~2009, readable at
\url{http://math.berkeley.edu/~gbergman/papers}.

\bibitem{prod_Lie2} {\bysame} and \bysame,
{\em Linear maps on $k^I,$ and homomorphic images of
infinite direct product algebras}, preprint, 13pp., Oct.~2009,
readable at \url{http://math.berkeley.edu/~gbergman/papers}.

\bibitem{pro-p} J. D. Dixon, M. P. F. du Sautoy, A. Mann, and D. Segal,
{\em Analytic pro-$\!p\!$ groups}, 
2nd edition. Cambridge Studies in Advanced Mathematics, v. 61, 1999.
MR~{\bf 2000m}:20039.

\bibitem{Lefschetz} Solomon Lefschetz,
{\em Algebraic Topology,}
Amer. Math. Soc. Colloq. Pub., vol.\,27, 1942, reprinted 1963.
~MR~{\bf 4},~p.\,84.

\bibitem{CW} Saunders Mac~Lane,
{\em Categories for the Working Mathematician},
Springer GTM, v.5,~1971.
MR~{\bf 50}\#7275.

\bibitem{AM} A. Mekei,
{\em On subalgebras of finite codimension} (Russian),
Stud. Sci. Math. Hung. {\bf 27} (1992) 119-123. 
Zbl~0827.16015.

\bibitem{NN+DS.I} Nikolay Nikolov and Dan Segal,
{\em On finitely generated profinite groups. I. Strong completeness
and uniform bounds},
Ann. of Math. (2) {\bf 165} (2007) 171--238.
MR~{\bf 2008f}:20052.

\bibitem{NN+DS.II} {\bysame} and \bysame,
{\em On finitely generated profinite groups. II. Products in
quasisimple groups},
Ann. of Math. (2) {\bf 165} (2007) 239--273.
MR~{\bf 2008f}:20053.

\bibitem{DR+VT} David Riley and Vladimir Tasi\'{c},
{\em On the growth of subalgebras in Lie p-algebras},
J. Alg., {\bf 237} (2001) 273-286.
Zbl~1028.17012.

\bibitem{Schafer} Richard D. Schafer, 
{\em An Introduction to Nonassociative Algebras},
Academic Press, 1966; Dover, 1995.
MR~{\bf 35}\#1643, MR~{\bf 96j}:17001.

\bibitem{Serre} Jean-Pierre Serre,
{\em Galois Cohomology}, 
Springer, 1997.
MR~{\bf 98g}:12007.

\end{thebibliography}
\end{document}